\newtheorem{theorem}{Theorem}[section]
\newtheorem{lemma}{Lemma}[section]
\newtheorem{corollary}{Corollary}[section]
\begin{document}

 \sloppy

\begin{center}
{\bf \Large\bf On one condition of absolutely continuous spectrum for self-adjoint operators and its applications.}
\\ \bigskip
{\large Ianovich E.A.}
\\ \bigskip
   {\it Saint Petersburg, Russia}
\\ \bigskip
   {\it\small E-mail: eduard@yanovich.spb.ru}
\end{center}

\begin{abstract}
In this work the method of analyzing of the absolutely continuous spectrum for self-adjoint operators is considered. For the analysis it is used an approximation of self-adjoint operator $A$ by a sequence of operators $A_n$ with absolutely continuous spectrum on a given interval $[a,b\,]$ which converges to $A$ in a strong sense on a dense set. The notion of equi-absolute continuity is also used. It was found a sufficient condition of absolute continuity of the operator $A$ spectrum on the finite interval $[a,b\,]$ and the condition for that the corresponding spectral density belongs to the class $L_p[a,b\,]$ ($p\ge 1$).
The application of this method to Jacobi matrices is considered. As a one of the results we obtain the following assertion: 
Under some mild assumptions (see details in Theorem (2.4)), suppose that there exist a constant $C>0$ and a positive function $g(x)\in L_p[a,b\,]$ ($p\ge1$) such that for all $n$ sufficiently large and almost all $x\in[a,b\,]$ the estimate 
$\frac{\displaystyle 1}{\displaystyle g(x)}\le b_n(P_{n+1}^2(x)+P_{n}^2(x))\le C$ holds,
where $P_n(x)$ are 1st type polynomials associated with Jacobi matrix (in the sense of Akhiezer) and $b_n$ is a second diagonal sequence of Jacobi matrix. Then the spectrum of Jacobi matrix operator is purely absolutely continuous on $[a,b\,]$ and for the corresponding spectral density $f(x)$ we have $f(x)\in L_p[a,b\,]$.
\end{abstract}

\bigskip

\section{Statement of the problem and main results.}

Let us consider the following problem. Let $\{A_n\}_1^\infty$ be a sequence of linear self-adjoint operators in a separable Hilbert space $H$ with absolutely continuous spectrum on some interval $[a,b\,]$ and $\Phi$ be some dense set in $H$. Suppose that on $\Phi$ there exists in a strong sense the limit $\lim\limits_{n\to\infty}A_n$ and the closure of this limit operator is self-adjoint operator A. That is for any $e\in\Phi$
$$
\lim\limits_{n\to\infty}\|(A_n-A)e\|=0
$$

Suppose also that $[a,b\,]\subseteq\sigma(A)$. There arises the question: what is the condition which guarantees that the spectrum of a limit operator $A$ is also absolutely continuous on $[a,b\,]$ ?

Let $E_\lambda^{(n)}$ and $E_\lambda$ be the resolutions of the identity of the operators $A_n$ and $A$ respectively and let
$$
\sigma_n(\lambda;\,e)=(E_\lambda^{(n)} e, e),\quad e\in H
$$
$$
\sigma(\lambda;\,e)=(E_\lambda e, e),\quad e\in H
$$
By assumption all functions $\sigma_n(\lambda;\,e)$ are absolutely continuous on $[a,b\,]$ so that
$$
\sigma_n(\lambda;\, e)=\sigma_n(a;\, e)+\int\limits_{a}^\lambda f_n(t; e)\,dt\,,\quad e\in H\,,\quad \lambda\in[a,b\,],
$$
where $f_n(t;\,e)$ is a sequence of non-negative summable on $[a,b\,]$ functions.

It may to suppose that the convergence of the sequence $A_n$ to the operator $A$ in a strong sense on the set $\Phi$ should leads to the convergence of the sequence $\sigma_n(\lambda; e)$ to the function $\sigma(\lambda; e)$. It turns out it is true.
\begin{lemma}
\label{lemma2}
If $\lambda$ is not an eigenvalue of the operator $A$, then for all $e\in H$
\begin{equation}
\label{sigma_n_to_sigma}
\lim\limits_{n\to\infty}\sigma_n(\lambda; e)=\sigma(\lambda; e)
\end{equation}
\end{lemma}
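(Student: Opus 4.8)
The plan is to derive (\ref{sigma_n_to_sigma}) from strong resolvent convergence of the $A_n$ to $A$, and then to pass from resolvents to the spectral functions by squeezing the discontinuous indicator $\chi_{(-\infty,\lambda]}$ between continuous functions; the hypothesis on $\lambda$ will enter precisely at the last step.

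First I would establish strong resolvent convergence. Fix $z$ with $\mathrm{Im}\,z\neq0$ and set $R_n(z)=(A_n-z)^{-1}$, $R(z)=(A-z)^{-1}$, both bounded by $1/|\mathrm{Im}\,z|$ by self-adjointness. For $u\in\Phi$ the second resolvent identity gives $(R_n(z)-R(z))(A-z)u=R_n(z)(A-A_n)u$, so that $\|(R_n(z)-R(z))(A-z)u\|\le\|(A_n-A)u\|/|\mathrm{Im}\,z|\to0$. Since $A$ is the closure of the limit operator defined on $\Phi$, the set $\Phi$ is a core for $A$, hence $(A-z)\Phi$ is dense in $H$; combined with the uniform bound $\|R_n(z)\|\le1/|\mathrm{Im}\,z|$ this extends the convergence to all of $H$, i.e. $R_n(z)\to R(z)$ strongly. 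By the standard functional-calculus corollary of strong resolvent convergence, $\varphi(A_n)\to\varphi(A)$ strongly for every bounded continuous $\varphi$.

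Next I would run the squeeze. Note $\sigma_n(\lambda;e)=(E_\lambda^{(n)}e,e)=\int\chi_{(-\infty,\lambda]}\,d(E_\mu^{(n)}e,e)$. For $\delta>0$ choose continuous functions with $0\le\psi_\delta\le\chi_{(-\infty,\lambda]}\le\varphi_\delta\le1$, where $\psi_\delta\equiv1$ on $(-\infty,\lambda-\delta]$ and vanishes on $[\lambda,\infty)$, while $\varphi_\delta\equiv1$ on $(-\infty,\lambda]$ and vanishes on $[\lambda+\delta,\infty)$. Since $d(E_\mu^{(n)}e,e)$ is a non-negative measure, $(\psi_\delta(A_n)e,e)\le\sigma_n(\lambda;e)\le(\varphi_\delta(A_n)e,e)$. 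Letting $n\to\infty$ and using the functional calculus, then bounding $(\psi_\delta(A)e,e)\ge(E_{\lambda-\delta}e,e)$ and $(\varphi_\delta(A)e,e)\le(E_{\lambda+\delta}e,e)$ by the spectral theorem, I obtain $(E_{\lambda-\delta}e,e)\le\liminf_n\sigma_n(\lambda;e)\le\limsup_n\sigma_n(\lambda;e)\le(E_{\lambda+\delta}e,e)$.

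Finally, letting $\delta\to0^+$, the monotone one-sided limits give $(E_{\lambda\pm\delta}e,e)\to(E_{\lambda\pm0}e,e)$. The assumption that $\lambda$ is not an eigenvalue of $A$ means the resolution of the identity has no jump at $\lambda$, i.e. $E_{\lambda+0}-E_{\lambda-0}=0$; hence both outer bounds coincide with $(E_\lambda e,e)=\sigma(\lambda;e)$, which proves (\ref{sigma_n_to_sigma}). I expect the genuine crux to be exactly this last step: the non-eigenvalue hypothesis is what forces the jump of $E_\mu$ at $\lambda$ to vanish and lets the two-sided bound collapse to a single value, whereas at an eigenvalue the gap between the bounds would persist, equal to the quantity $((E_{\lambda+0}-E_{\lambda-0})e,e)$. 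A secondary point demanding care is that strong convergence is assumed only on the core $\Phi$ rather than on all of $D(A)$, which is why the density of $(A-z)\Phi$ must be used to propagate strong resolvent convergence to the whole space.
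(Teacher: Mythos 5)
Your proof is correct, but it takes a more self-contained route than the paper. The paper's own proof is essentially a citation: it invokes the classical theorem (Akhiezer--Glazman) that strong convergence of $A_n$ to $A$ on a dense set implies \emph{strong} convergence $\|(E_\lambda^{(n)}-E_\lambda)e\|\to0$ at every $\lambda$ that is not an eigenvalue of $A$, and then finishes in one line with the Cauchy--Schwarz estimate $|((E_\lambda^{(n)}-E_\lambda)e,e)|\le\|(E_\lambda^{(n)}-E_\lambda)e\|\,\|e\|$. You instead reprove the needed convergence from first principles: second resolvent identity on the core $\Phi$, propagation to all of $H$ via the uniform bound $\|R_n(z)\|\le1/|\mathrm{Im}\,z|$ and the density of $(A-z)\Phi$, the functional-calculus consequence of strong resolvent convergence for bounded continuous functions, and a squeeze of $\chi_{(-\infty,\lambda]}$ between continuous cutoffs, with the non-eigenvalue hypothesis collapsing the two-sided bound. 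Note that your argument establishes only the convergence of the quadratic forms $(E_\lambda^{(n)}e,e)$, which is weaker than the strong operator convergence the cited theorem provides, but it is exactly what the lemma asserts; in exchange you get a proof that does not lean on an external reference and makes explicit where each hypothesis (the core property of $\Phi$, the absence of a jump of $E_\mu$ at $\lambda$) enters. Both arguments are sound; yours is longer but transparent, the paper's is shorter but opaque without consulting the reference.
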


\begin{proof} It is known~\cite{1} that the strong convergence of the sequence $A_n$ to $A$ on a dense subset in $H$ leads to the strong convergence of the sequence $E_\lambda^{(n)}$ to $E_\lambda$ for those $\lambda$ which are not the eigenvalues of $A$. That is if $\lambda$ is not an eigenvalue of the operator $A$ then for all $e\in H$
$$
\lim\limits_{n\to\infty}\|(E_\lambda^{(n)}-E_\lambda)e\|=0
$$
Further, we have
$$
|\sigma_n(\lambda; e)-\sigma(\lambda; e)|=|((E_\lambda^{(n)}-E_\lambda)e, e)|\le\|(E_\lambda^{(n)}-E_\lambda)e\|\,,
$$
whence the statement of the Lemma readily follows.
\end{proof}

Note that as we suppose that the space $H$ is separable, the set of eigenvalues of the operator $A$ is no more than denumerable and hence the equality~(\ref{sigma_n_to_sigma}) is fulfilled for almost all $\lambda$.

For further consideration we need the notion of equi-absolute continuity~\cite{2,3}. The sequence $g_n(x)$ of absolutely continuous on the interval $[a,b\,]$ functions
$$
g_n(x)=g_n(a)+\int\limits_a^x f_n(t)\,dt
$$
is called equi-absolutely continuous on this interval if for any $\epsilon>0$ there exists $\delta>0$ (which is independent of $n$) such that for any measurable subset $e\subset[a,b\,]$ the inequality
$$
\left|\int\limits_e f_n(t)\,dt\right|<\epsilon
$$
is valid, provided $m(e)<\delta$ ( $m(\cdot)$ is Lebesgue measure ). The sequence of summable functions $f_n(t)$ is said to be have equi-absolutely continuous integrals on $[a,b\,]$.

Let us give now the answer on the question stated above.
\begin{theorem}
\label{criterion_of_absolutely_continuity}
In order that the spectrum of the operator $A$ on the interval $[a,b\,]$ $(\,[a,b\,]\subseteq\sigma(A)\,)$ be absolutely continuous it is sufficient that for any vector $e$ from generating subspace of the operator $A$ the sequence of functions $\{\sigma_n(\lambda;\, e)\}$ be equi-absolutely continuous on $[a,b\,]$, or that is the same, the sequence of functions
$\{f_n(t; e)\}$ has equi-absolutely continuous integrals on $[a,b\,]$. Under these conditions the sequence $\{\sigma_n(\lambda;\, e)\}$ uniformly converges to $\sigma(\lambda;\, e)$ on $[a,b\,]$.
\end{theorem}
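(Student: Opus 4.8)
The plan is first to prove, for each fixed vector $e$ in the generating subspace, that the limit spectral function $\sigma(\lambda;e)$ is absolutely continuous on $[a,b\,]$, and only afterwards to lift this from individual vectors to the operator $A$ itself. Fix such an $e$. Two preliminary observations are in order. Since $E_b^{(n)}-E_a^{(n)}$ is an orthogonal projection, the densities satisfy $\int_a^b f_n(t;e)\,dt=\sigma_n(b;e)-\sigma_n(a;e)\le\|e\|^2$, so the $L^1[a,b\,]$-norms of the nonnegative functions $f_n(\cdot;e)$ are uniformly bounded; moreover, the hypothesis of equi-absolute continuity is precisely the statement that $\{f_n(\cdot;e)\}$ is uniformly integrable. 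By the Dunford--Pettis theorem this family is therefore relatively weakly compact in $L^1[a,b\,]$.

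I would then identify any weak limit. Let $\Lambda\subset[a,b\,]$ be the co-countable set of points that are not eigenvalues of $A$; by Lemma~\ref{lemma2}, $\sigma_n(\lambda;e)\to\sigma(\lambda;e)$ for every $\lambda\in\Lambda$. Along any subsequence for which $f_{n_k}(\cdot;e)\rightharpoonup f$ weakly, testing against the indicator of $[a,\lambda\,]$ with $\lambda\in\Lambda$ (and, say, $a\in\Lambda$) yields
$$
\int_a^\lambda f(t)\,dt=\lim_{k\to\infty}\int_a^\lambda f_{n_k}(t;e)\,dt=\sigma(\lambda;e)-\sigma(a;e).
$$
Thus the monotone function $\sigma(\cdot;e)$ agrees on the dense set $\Lambda$ with the continuous function $\lambda\mapsto\sigma(a;e)+\int_a^\lambda f$, and by monotonicity the two coincide everywhere on $[a,b\,]$. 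Hence $\sigma(\cdot;e)$ is absolutely continuous with density $f$; since $f$ must equal the a.e.\ derivative of $\sigma(\cdot;e)$ it does not depend on the subsequence, so the whole sequence $f_n(\cdot;e)$ converges weakly to it. One may bypass Dunford--Pettis by verifying absolute continuity directly: for disjoint intervals $(\alpha_i,\beta_i)$ with endpoints in $\Lambda$ and total length below the $\delta$ furnished by equi-absolute continuity, $\sum_i|\sigma(\beta_i;e)-\sigma(\alpha_i;e)|=\lim_n\int_{\cup_i(\alpha_i,\beta_i)}f_n\le\epsilon$, and monotonicity removes the restriction on the endpoints.

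To pass to the operator, I would note that the set of $e\in H$ for which $\sigma(\cdot;e)$ is absolutely continuous on $[a,b\,]$ is a closed subspace invariant under the spectral projections of $A$ (using $\mu_{e+f}\le 2(\mu_e+\mu_f)$ for sums and the uniform bound $|\mu_{e_k}(\Delta)-\mu_e(\Delta)|\le\|e_k-e\|(\|e_k\|+\|e\|)$ for closedness). By hypothesis it contains a generating subspace, so it contains $E([a,b\,])H$; this is exactly the assertion that the spectrum of $A$ is absolutely continuous on $[a,b\,]$. Finally, the uniform convergence follows from P\'olya's theorem: the functions $\sigma_n(\cdot;e)$ and $\sigma(\cdot;e)$ are non-decreasing, the limit is now known to be continuous, and convergence already holds on the dense set $\Lambda$; pointwise convergence of monotone functions to a continuous monotone limit on a compact interval is automatically uniform.

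The step I expect to be the main obstacle is the first one --- extracting absolute continuity of the limit measure from equi-absolute continuity of the approximants --- together with the bookkeeping needed to ensure the countable exceptional set of eigenvalues of $A$ causes no trouble. The resolution is to invoke Lemma~\ref{lemma2} only at non-eigenvalues and to use the monotonicity of $\sigma(\cdot;e)$ to fill the gaps; the remaining ingredients (Dunford--Pettis or the direct $\epsilon$-$\delta$ estimate, the closed-subspace argument, and P\'olya's theorem) are comparatively routine.
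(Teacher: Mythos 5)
Your argument is correct and reaches the same conclusions, but by a genuinely different route than the paper. The paper works at the level of the distribution functions: equi-absolute continuity plus the uniform bound $0\le\sigma_n\le\|e\|^2$ give compactness in $C[a,b\,]$, a uniformly convergent subsequence is extracted whose (absolutely continuous) limit is identified with $\sigma(\cdot;e)$ almost everywhere via Lemma~\ref{lemma2}, a Cauchy-type estimate (again using equi-absolute continuity to control $|\sigma_n(\lambda_0)-\sigma_n(\lambda')|$) upgrades this to convergence everywhere, and uniform convergence of the full sequence is then obtained by a subsequence/contradiction argument. You instead work at the level of the densities $f_n(\cdot;e)$: uniform integrability plus Dunford--Pettis (or, more elementarily, your direct $\epsilon$--$\delta$ estimate over intervals with endpoints avoiding the eigenvalues) identifies the limit measure as absolutely continuous, monotonicity of $\sigma(\cdot;e)$ fills in the countable exceptional set, and P\'olya's theorem replaces both of the paper's convergence arguments in one stroke. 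Two further remarks. First, your closed-subspace argument for passing from individual vectors of the generating subspace to the operator $A$ itself is a genuine addition: the paper simply asserts this step, and your verification (stability of absolute continuity under sums, limits, and spectral projections) is exactly what is needed to justify it. Second, both your proof and the paper's share the same unexamined edge case at the endpoints: if, say, $b$ is an eigenvalue of $A$, neither Lemma~\ref{lemma2} nor the interval-enlargement trick controls $\sigma(b;e)$, since enlarging past $b$ leaves the region where equi-absolute continuity is assumed; your parenthetical ``say, $a\in\Lambda$'' quietly assumes this away, just as the paper's claim that $\sigma=\sigma_1$ ``everywhere'' does. This is a defect of the statement's formulation rather than of your proof, so I would not count it against you, but it is worth flagging explicitly rather than absorbing into a parenthesis.
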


\begin{proof} Fix some vector $e$ from generating subspace of $A$. For the brevity we will omit this vector in the arguments of all functions.

Consider the sequence of functions $\sigma_n(\lambda)$ on the interval $[a,b\,]$.
$$
\sigma_n(\lambda)=\sigma_n(a)+\int\limits_{a}^\lambda f_n(t)\,dt\,,\quad \lambda\in[a,b\,]
$$

Here $f_n(t)\ge0$ on $[a,b\,]$ but the case $f_n(t)\equiv0$ is also possible if for example the corresponding vector $e$ belongs to the discrete subspace of the operator $A_n$. In this case the function $\sigma_n(\lambda)$ is constant on $[a,b\,]$.

From Lemma~(\ref{lemma2}) we conclude that for almost all $\lambda\in[a,b\,]$ $\lim\limits_{n\to\infty}\sigma_n(\lambda)=\sigma(\lambda)$.
By assumption the sequence $\sigma_n(\lambda)$ is equi-absolutely continuous on $[a,b\,]$. Besides, by virtue of its definition, it is uniformly bounded ($0\le\sigma_n(\lambda)\le\|e\|^2\,,\, \lambda\in R$). Hence~\cite{2}, from the sequence $\sigma_n(\lambda)$ one can extract a partial subsequence $\sigma_{n_k}(\lambda)$ which uniformly converges on $[a,b\,]$, moreover the limit function $\sigma_1(\lambda)$ is absolutely continues on $[a,b\,]$. So, we obtain
$$
(*)\quad\lim\limits_{n\to\infty}\sigma_n(\lambda)=\sigma(\lambda)\quad\mbox{for almost all $\lambda$ from $[a,b\,]$}
$$
$$
\lim\limits_{k\to\infty}\sigma_{n_k}(\lambda)=\sigma_1(\lambda)\quad\mbox{for all $\lambda$ from $[a,b\,]$}
$$
Comparing this two limit equalities we can note that $\sigma_1(\lambda)=\sigma(\lambda)$ for almost all $\lambda\in[a,b\,]$. Let us show that the first equality (*) is fulfilled everywhere in $[a,b\,]$. Let $\lambda_0$ be a point in which the equality (*) perhaps is not valid. Consider the sequence $\sigma_n(\lambda_0)$. We have
$$
|\sigma_n(\lambda_0)-\sigma_m(\lambda_0)|\le|\sigma_n(\lambda_0)-\sigma_n(\lambda')|+|\sigma_n(\lambda')-\sigma_m(\lambda')|+
|\sigma_m(\lambda')-\sigma_m(\lambda_0)|
$$
Here $\lambda'$ is an arbitrary point of the interval $[a,b\,]$. Since the set of the possible points $\lambda_0$ has measure zero, in any arbitrarily small neighbourhood of the point $\lambda_0$ there exists an infinite number of points in which the sequence $\sigma_n(\lambda)$ converges. Hence one can choose the point $\lambda'$ such that on the one hand the sequence $\sigma_n(\lambda')$ converges and on the other hand the inequality
$$
|\sigma_n(\lambda_0)-\sigma_n(\lambda')|<\epsilon/3
$$
holds for any $\epsilon>0$ and any $n$.

The validity of this inequality for any $n$ is possible by virtue of equi-absolute continuity of the sequence $\sigma_n(\lambda)$.

Fixing such $\lambda'$ and $\epsilon$, one can choose a number $N$ such that the inequality
$$
|\sigma_n(\lambda')-\sigma_m(\lambda')|<\epsilon/3
$$
is valid, provided $n,m>N$. Then for the same $n$ and $m$
$$
|\sigma_n(\lambda_0)-\sigma_m(\lambda_0)|<\epsilon
$$
Since $\epsilon$ is arbitrarily small the sequence $\sigma_n(\lambda_0)$ converges by Cauchy's criterion.

Thus the sequence $\sigma_n(\lambda)$ converges everywhere in $[a,b\,]$ and therefore $\sigma(\lambda)=\sigma_1(\lambda)$ also everywhere in $[a,b\,]$. Hence the function $\sigma(\lambda)$ is absolutely continuous on this interval. Note that the sequence $\sigma_n(\lambda)$ converges to $\sigma(\lambda)$ uniformly on $[a,b\,]$. It can be proved by contradiction. Actually suppose it is not true. Then there exist a positive $\epsilon$, subsequence $n_k$ and corresponding to it points $x_k\in[a,b\,]$ so that
\begin{equation}
\label{contradiction}
|\sigma_{n_k}(\lambda_k)-\sigma(\lambda_k)|\ge\epsilon(>0)\,,\quad k=0,1,2,\ldots
\end{equation}
Sequence $\sigma_{n_k}(\lambda)$ contains uniformly convergent subsequence $\sigma_{n_{k_m}}(\lambda)$ in $[a\,b\,]$ and the limit function is $\sigma(\lambda)$. Hence
$$
|\sigma_{n_{k_m}}(\lambda)-\sigma(\lambda)|<\epsilon\,,
$$
provided $\lambda\in[a,b\,]$ and $n_{k_m}>N$. In particular 
$$
|\sigma_{n_{k_m}}(\lambda_{k_m})-\sigma(\lambda_{k_m})|<\epsilon
$$
for $m$ sufficiently large. But this inequality contradicts to~(\ref{contradiction}), and the uniform convergence is proved.

Thus we proved that for any vector $e$ from generating subspace of $A$ the functions $\sigma(\lambda)\equiv\sigma(\lambda; e)$ are absolutely continuous on $[a,b\,]$. Since $[a,b\,]\subseteq\sigma(A)$ it follows already that the spectrum of the operator $A$ is absolutely continuous on this interval. The theorem is proved.
\end{proof}

\begin{corollary}
\label{criterion_of_absolutely_continuity_corollary}
If for any vector $e$ from generating subspace of $A$ the sequence of functions $\{\sigma_n(\lambda;\, e)\}$ is equi-absolutely continuous on any finite interval of a real axis then the spectrum of the operator $A$ is purely absolutely continuous.
\end{corollary}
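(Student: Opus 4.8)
The plan is to reduce the global statement to the interval version already established in Theorem~\ref{criterion_of_absolutely_continuity}. Write the real axis as an exhaustion $R=\bigcup_{N=1}^\infty[-N,N]$. The hypothesis of the corollary asserts equi-absolute continuity of $\{\sigma_n(\lambda;e)\}$ on \emph{every} finite interval, hence in particular on each $[-N,N]$. Inspecting the proof of Theorem~\ref{criterion_of_absolutely_continuity}, I would note that the conclusion that the limit function $\sigma(\lambda;e)$ is absolutely continuous is obtained purely from equi-absolute continuity, uniform boundedness $0\le\sigma_n(\lambda)\le\|e\|^2$, and Lemma~\ref{lemma2}; the inclusion $[a,b]\subseteq\sigma(A)$ enters only at the very last step, where it merely translates absolute continuity of the spectral functions into a statement about the spectrum. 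Therefore, for each fixed $e$ in the generating subspace, the same argument yields that $\sigma(\lambda;e)$ is absolutely continuous on $[-N,N]$ for every $N$, whether or not these intervals meet $\sigma(A)$.

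Next I would pass from the intervals $[-N,N]$ to the whole line. Let $\mu_e$ denote the finite Borel measure on $R$ whose distribution function is $\sigma(\lambda;e)=(E_\lambda e,e)$, so that $\mu_e(R)=\|e\|^2$, and let $S\subset R$ be an arbitrary set with $m(S)=0$. For each $N$ the set $S\cap[-N,N]$ is a Lebesgue-null subset of $[-N,N]$, so absolute continuity of $\sigma(\cdot;e)$ on $[-N,N]$ forces $\mu_e(S\cap[-N,N])=0$. Since $S\cap[-N,N]\uparrow S$ as $N\to\infty$, continuity of the finite measure $\mu_e$ from below gives $\mu_e(S)=\lim\limits_{N\to\infty}\mu_e(S\cap[-N,N])=0$. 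Thus $\mu_e$ is absolutely continuous with respect to Lebesgue measure, i.e. $\sigma(\lambda;e)$ is absolutely continuous on all of $R$, for every $e$ in the generating subspace.

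Finally I would convert this into a statement about the spectral type of $A$. Recall the orthogonal decomposition $H=H_{ac}(A)\oplus H_{s}(A)$ into the absolutely continuous and singular subspaces, where $e\in H_{ac}(A)$ precisely when the measure $\mu_e$ is absolutely continuous with respect to $m$. The previous paragraph shows that the whole generating subspace is contained in $H_{ac}(A)$. Since $H_{ac}(A)$ is a closed subspace reducing $A$ (invariant under all the spectral projections $E_\lambda$), and the generating subspace is by definition cyclic for $A$ (its closed invariant hull under the spectral projections is $H$), it follows that $H_{ac}(A)=H$. Equivalently, $H_{s}(A)=\{0\}$, so the spectrum of $A$ is purely absolutely continuous, as claimed.

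I expect the exhaustion and countable-additivity argument of the second paragraph to be routine; the two points deserving care are, first, the observation used in the first paragraph that the proof of Theorem~\ref{criterion_of_absolutely_continuity} genuinely delivers absolute continuity of $\sigma(\lambda;e)$ without invoking $[a,b]\subseteq\sigma(A)$, so that the interval result is applicable on pieces lying partly or wholly in the resolvent set; and, second, the passage in the last paragraph, where one must ensure that absolute continuity of $\mu_e$ for every $e$ in the generating subspace really propagates to all of $H$. This latter step is the main obstacle in spirit, as it rests on the facts that $H_{ac}(A)$ is closed and $A$-reducing and that the generating subspace is cyclic, so that no singular component can survive.
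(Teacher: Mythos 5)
Your proposal is correct and follows the route the paper intends: the corollary is stated without proof as an immediate consequence of Theorem~\ref{criterion_of_absolutely_continuity} applied to an exhaustion of the real axis by finite intervals, which is exactly your first step. The additional details you supply --- that the theorem's proof yields absolute continuity of $\sigma(\lambda;e)$ without using $[a,b\,]\subseteq\sigma(A)$, the passage from local to global absolute continuity of $\mu_e$ by continuity from below, and the propagation from the generating subspace to all of $H$ via the closed reducing subspace $H_{ac}(A)$ --- are all sound and merely make explicit what the paper leaves implicit.
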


In~\cite{4} it was used in fact a simple sufficient condition of equi-absolute continuity when $f_n(t)\le g(t)$ where
$g(t)\in L_\infty[a,b\,]$. One can easily generalize it on the case of any $L_p,\, p\ge1$.

\begin{lemma}
\label{equi-absolute_condition}
Suppose there exists a positive function $g(t)\in L_p[a,b\,]$ ($p\ge1$) such that $f_n(t)\le g(t)$ for all $n\in\mathbb{N}$ and almost all $t\in[a,b\,]$. Then the sequence of functions $f_n(t)$ has equi-absolutely continuous integrals on $[a,b\,]$.
\end{lemma}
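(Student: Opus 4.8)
The plan is to reduce everything to a statement about the single dominating function $g$, since the hypothesis $f_n(t)\le g(t)$ together with non-negativity of $f_n$ immediately gives, for any measurable $e\subset[a,b\,]$,
$$
\left|\int\limits_e f_n(t)\,dt\right|=\int\limits_e f_n(t)\,dt\le\int\limits_e g(t)\,dt\,.
$$
The crucial feature here is that the right-hand side no longer depends on $n$. Thus the entire problem collapses to showing that $\int_e g(t)\,dt$ can be made smaller than $\epsilon$ uniformly in $e$ once $m(e)$ is small enough; any $\delta$ produced for $g$ then serves simultaneously for every $f_n$, which is exactly equi-absolute continuity.

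For the case $p>1$ I would simply invoke H\"older's inequality with the conjugate exponent $q=p/(p-1)$:
$$
\int\limits_e g(t)\,dt\le\left(\int\limits_e g^p(t)\,dt\right)^{1/p}\left(\int\limits_e 1\,dt\right)^{1/q}\le\|g\|_{L_p[a,b\,]}\,\bigl(m(e)\bigr)^{1/q}\,.
$$
Given $\epsilon>0$ it then suffices to choose $\delta>0$ so that $\|g\|_{L_p[a,b\,]}\,\delta^{1/q}<\epsilon$, and this $\delta$ depends only on $g$ and $\epsilon$, as required.

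The remaining case $p=1$ cannot be handled by H\"older (the exponent $q$ would be infinite), so there I would fall back on the absolute continuity of the Lebesgue integral of the single summable function $g$. Concretely, I would truncate by writing $\int_e g=\int_e\min(g,M)+\int_{e\cap\{g>M\}}g\le M\,m(e)+\int_{\{g>M\}}g$, first fixing $M$ large enough that the tail $\int_{\{g>M\}}g<\epsilon/2$ (possible since $g\in L_1$), and then taking $\delta=\epsilon/(2M)$ so that the first term is below $\epsilon/2$ whenever $m(e)<\delta$. This standard argument is the only genuinely non-trivial step; everything else is a routine reduction. Combining the two cases yields a $\delta$ independent of $n$, establishing that $\{f_n(t)\}$ has equi-absolutely continuous integrals on $[a,b\,]$.
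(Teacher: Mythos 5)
Your proof is correct, and for $p>1$ it is essentially the paper's argument: both reduce to a single application of H\"older's inequality producing the $n$-independent bound $\|g\|_p\,\delta^{1/q}$ (the paper applies H\"older to $f_n$ itself and then uses $\int_e f_n^p\le\int_e g^p$, while you first pass to $\int_e f_n\le\int_e g$ and apply H\"older to $g$; the two are interchangeable). Where you genuinely depart from the paper is the endpoint $p=1$: the paper's displayed inequality gives only $\|g\|_1\,\delta^{1/q}$ with $1/q=0$, i.e.\ the constant $\|g\|_1$, which does not shrink with $\delta$, so as written it does not establish the claim for $p=1$. Your separate truncation argument ($\int_e g\le M\,m(e)+\int_{\{g>M\}}g$, choosing $M$ first and then $\delta$) is the standard absolute-continuity-of-the-integral argument and closes exactly that case; in fact it alone proves the lemma for every $p\ge1$, since $L_p[a,b\,]\subset L_1[a,b\,]$ on a finite interval, so your H\"older step is a convenience rather than a necessity. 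One cosmetic slip: your identity $\int_e g=\int_e\min(g,M)+\int_{e\cap\{g>M\}}g$ is not an equality (the right-hand side overcounts by $M\,m(e\cap\{g>M\})$), but the inequality $\le$ holds and that is all you use, so the argument stands.
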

\begin{proof}
Let $e\subset[a,b\,]$ be a measurable set and $m(e)<\delta$. By G\"older's inequality
$$
\int\limits_e f_n(t)\,dt\le\left(\int\limits_e f_n^p(t)\,dt\right)^{1/p}\left(\int\limits_e dt\right)^{1/q}
\le\|g\|_p\,\delta^{1/q}\,,
$$
whence equi-absolute continuity readily follows.
\end{proof}

Of course, if under conditions of this Lemma we have pointwise convergence or convergence almost everywhere $f_n(t)$ to a function $f(t)$ then Lebesgue's dominated convergence theorem gives at once $f(t)\in L_p$. But such convergence doesn't take place generally even under conditions of the Theorem~(\ref{criterion_of_absolutely_continuity}) as it follows for instance from the following example.

Let $f_n(t)=1+\cos(nt)$. We have uniformly in $x\in[0;1]$
$$
\int\limits_0^x(1+\cos(nt))\,dt\to\int\limits_0^x dt
$$
as $n\to\infty$. But the limit $\lim\limits_{n\to\infty} f_n(t)$ exists nowhere on $[0;1]$ except $t=0$.

Nevertheless one can state the following almost obvious assertion

\begin{theorem}
\label{condition_L_p}
Assume that there exists a summable on $[a,b\,]$ non-negative function $f(t)$ such that $\forall x\in[a,b\,]$
$$
\lim\limits_{n\to\infty}\int\limits_a^x f_n(t)\,dt=\int\limits_a^x f(t)\,dt
$$
Assume also that the sequence $f_n(t)$ is dominated by a positive function $g(t)\in L_p[a,b\,]$ ($p\ge1$), that is $f_n(t)\le g(t)$ for all $n\in\mathbb{N}$ and almost all $t\in[a,b\,]$. Then $f(t)\in L_p[a,b\,]$.
\end{theorem}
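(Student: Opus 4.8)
The plan is to reduce the statement to the pointwise inequality $f(t)\le g(t)$ for almost all $t\in[a,b\,]$, from which membership in $L_p$ is immediate: since $g\in L_p$ and both functions are non-negative, $f^p\le g^p$ almost everywhere, so $\int_a^b f^p(t)\,dt\le\int_a^b g^p(t)\,dt<\infty$, giving $f\in L_p[a,b\,]$ together with the bound $\|f\|_p\le\|g\|_p$.

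First I would upgrade the hypothesis from intervals of the form $[a,x\,]$ to arbitrary subintervals. For any $[c,d\,]\subseteq[a,b\,]$ one has
$$
\int\limits_c^d f_n(t)\,dt=\int\limits_a^d f_n(t)\,dt-\int\limits_a^c f_n(t)\,dt\longrightarrow\int\limits_a^d f(t)\,dt-\int\limits_a^c f(t)\,dt=\int\limits_c^d f(t)\,dt\,.
$$
On the other hand, the domination $f_n\le g$ gives $\int_c^d f_n(t)\,dt\le\int_c^d g(t)\,dt$ for every $n$, and passing to the limit yields
$$
\int\limits_c^d f(t)\,dt\le\int\limits_c^d g(t)\,dt\qquad\mbox{for every }[c,d\,]\subseteq[a,b\,]\,.
$$

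The decisive step is to convert this family of integral inequalities into a pointwise one, and here I would invoke the Lebesgue differentiation theorem. On the finite interval $[a,b\,]$ we have $g\in L_p[a,b\,]\subseteq L_1[a,b\,]$, while $f$ is summable by assumption, so both functions are Lebesgue integrable and possess Lebesgue points almost everywhere. At every common Lebesgue point $t$ of $f$ and $g$ — that is, for almost all $t\in[a,b\,]$ — dividing the interval inequality over $[t,t+h\,]$ by $h$ and letting $h\to0^+$ gives
$$
f(t)=\lim\limits_{h\to0^+}\frac1h\int\limits_t^{t+h} f(s)\,ds\le\lim\limits_{h\to0^+}\frac1h\int\limits_t^{t+h} g(s)\,ds=g(t)\,.
$$
Thus $f\le g$ almost everywhere, which is exactly what was needed.

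I expect the only genuine obstacle to be this passage from the integral inequality to the pointwise inequality, precisely because — as the author's preceding example with $f_n(t)=1+\cos(nt)$ demonstrates — one cannot hope for pointwise convergence $f_n\to f$, so Lebesgue's dominated convergence theorem is simply unavailable. The virtue of the Lebesgue differentiation theorem is that it recovers both $f$ and $g$ from their indefinite integrals without any reference to the approximating sequence, thereby bypassing the failure of pointwise convergence entirely.
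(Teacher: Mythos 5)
Your proof is correct and follows essentially the same route as the paper: establish the integral inequality $\int_c^d f\le\int_c^d g$ on subintervals and then apply the Lebesgue differentiation theorem to obtain $f\le g$ almost everywhere, whence $f\in L_p$. The only (cosmetic) difference is that the paper reaches the integral inequality via the reverse Fatou lemma, whereas you pass to the limit directly in $\int_c^d f_n\le\int_c^d g$, which is if anything slightly cleaner.
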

\begin{proof}
By reverse Fatou's lemma we have for any $[\alpha,\beta]\subseteq[a,b\,]$
$$
\int\limits_\alpha^\beta f(t)\,dt=\lim\limits_{n\to\infty}\int\limits_\alpha^\beta f_n(t)\,dt\le
\int\limits_\alpha^\beta \limsup\limits_{n\to\infty}f_n(t)\,dt\le\int\limits_\alpha^\beta g(t)\,dt
$$
From this it follows for $h>0$ and $x\in[a,b\,]$ ($f(t)=g(t)\equiv0$ for $t>b$)
$$
\frac{1}{h}\int\limits_x^{x+h}f(t)\,dt\le\frac{1}{h}\int\limits_x^{x+h}g(t)\,dt
$$
Passing to the limit $h\to 0$ in this inequality, we find almost everywhere on $[a,b\,]$
$$
f(x)\le g(x),
$$
and hence $f(t)\in L_p[a,b\,]$.
\end{proof}

Till now we supposed that $[a,b\,]\subseteq\sigma(A)$ but it is easy to give a sufficient condition for that in terms of functions $f_n(x)$. Actually by the same way as it was made in the Theorem~(\ref{condition_L_p}) one can prove that if $f_n(t)\ge C>0$ for all $n\in\mathbb{N}$ and almost all $t\in[a,b\,]$, then $f(t)\ge C>0$ for almost all $t\in[a,b\,]$ and hence $\sigma(A)$ is not empty on $[a,b\,]$.

Taking this into account and combining the results of Lemmas~(\ref{lemma2}), (\ref{equi-absolute_condition}) and Theorems~(\ref{criterion_of_absolutely_continuity}), (\ref{condition_L_p}), we obtain the following result which we will formulate for the brevity for the operators $A$ with simple spectrum

\begin{theorem}
\label{main_theorem}
Let $\{A_n\}_1^\infty$ be a sequence of linear self-adjoint operators in a separable Hilbert space $H$ with absolutely continuous spectrum on some interval $[a,b\,]$ and $\Phi$ be some dense set in $H$. Suppose that on $\Phi$ there exists in a strong sense the limit $\lim\limits_{n\to\infty}A_n$ and the closure of this limit operator is self-adjoint operator A with simple spectrum and generating vector $e_0$. Let $E_\lambda^{(n)}$ and $E_\lambda$ be the resolutions of the identity of the operators $A_n$ and $A$ respectively and let
$$
\sigma_n(\lambda)=(E_\lambda^{(n)} e_0, e_0)=\sigma_n(a)+\int\limits_{a}^\lambda f_n(t)\,dt\,,\quad \lambda\in[a,b\,]
$$
$$
\sigma(\lambda)=(E_\lambda e_0, e_0)\,,\quad \lambda\in[a,b\,]
$$
If there exist a constant $C>0$ and a positive function $g(t)\in L_p[a,b\,]$ ($p\ge1$) such that $C\le f_n(t)\le g(t)$ for all $n\in\mathbb{N}$ and almost all $t\in[a,b\,]$, then the spectrum of the operator $A$ on $[a,b\,]$ is purely absolutely continuous,
$$
\sigma(\lambda)=\sigma(a)+\int\limits_{a}^\lambda f(t)\,dt\,,\quad \lambda\in[a,b\,]
$$
and $f(t)\in L_p[a,b\,]$. Under these conditions the sequence $\sigma_n(\lambda)$ uniformly converges to $\sigma(\lambda)$ on $[a,b\,]$.
\end{theorem}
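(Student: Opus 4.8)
The plan is to assemble the four preceding results. Since $A$ has simple spectrum with generating vector $e_0$, its generating subspace is spanned by the single vector $e_0$, so it suffices to verify the hypotheses of Theorem~\ref{criterion_of_absolutely_continuity} and Theorem~\ref{condition_L_p} for this one vector, working throughout with the scalar spectral functions $\sigma_n(\lambda)$ and $\sigma(\lambda)$.

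First I would extract equi-absolute continuity: because $[a,b\,]$ is a finite interval we have $g\in L_p[a,b\,]\subseteq L_1[a,b\,]$, so the domination $f_n(t)\le g(t)$ together with Lemma~\ref{equi-absolute_condition} shows that $\{f_n\}$ has equi-absolutely continuous integrals on $[a,b\,]$, i.e. $\{\sigma_n(\lambda)\}$ is equi-absolutely continuous there. By Lemma~\ref{lemma2}, $\sigma_n(\lambda)\to\sigma(\lambda)$ for every $\lambda$ that is not an eigenvalue of $A$, hence for almost all $\lambda$. These are precisely the ingredients used in the proof of Theorem~\ref{criterion_of_absolutely_continuity}, so the same argument gives that $\sigma_n$ converges uniformly on $[a,b\,]$ to a function that is absolutely continuous and coincides with $\sigma$; this yields both the uniform-convergence conclusion and the representation $\sigma(\lambda)=\sigma(a)+\int_a^\lambda f(t)\,dt$ with $f$ summable.

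Next I would dispose of the hypothesis $[a,b\,]\subseteq\sigma(A)$, which is where the lower bound $f_n\ge C$ enters. From the uniform convergence just obtained we have $\int_a^x f_n\to\int_a^x f$ for every $x\in[a,b\,]$, and the reverse-Fatou argument of Theorem~\ref{condition_L_p} applied with the constant lower bound (as indicated in the remark preceding the theorem) gives $f(t)\ge C>0$ for almost all $t$. Consequently the measure $d\sigma$ assigns positive mass to every subinterval of $[a,b\,]$; since $e_0$ is a generating vector the support of $d\sigma$ is precisely $\sigma(A)$, so $[a,b\,]\subseteq\sigma(A)$. With this hypothesis verified, Theorem~\ref{criterion_of_absolutely_continuity} applies and shows the spectrum of $A$ is absolutely continuous on $[a,b\,]$; because $\sigma$ is absolutely continuous the measure $d\sigma$ has no singular part on $[a,b\,]$, and simplicity of the spectrum then upgrades this to \emph{purely} absolutely continuous.

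Finally, the membership $f\in L_p[a,b\,]$ follows directly from Theorem~\ref{condition_L_p}, whose hypotheses --- the convergence $\int_a^x f_n\to\int_a^x f$ for all $x$ and the domination $f_n\le g\in L_p$ --- are now both in hand. I expect no single step to be a serious obstacle, since everything reduces to checking that the scalar spectral function $\sigma$ inherits the right properties; the one point requiring care is the translation between the lower bound $f_n\ge C$ and the inclusion $[a,b\,]\subseteq\sigma(A)$, which relies on identifying the support of the spectral measure of the generating vector with the spectrum of $A$.
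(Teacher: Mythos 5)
Your proposal is correct and follows essentially the same route as the paper, which obtains this theorem precisely by combining Lemma~\ref{lemma2}, Lemma~\ref{equi-absolute_condition}, Theorem~\ref{criterion_of_absolutely_continuity} and Theorem~\ref{condition_L_p}, together with the observation (made in the paragraph preceding the statement) that $f_n\ge C>0$ forces $f\ge C>0$ almost everywhere and hence $[a,b\,]\subseteq\sigma(A)$. Your handling of the order of the steps --- deriving uniform convergence and absolute continuity of $\sigma$ first, and only then verifying $[a,b\,]\subseteq\sigma(A)$ from the lower bound via the support of the spectral measure of the generating vector --- is exactly the intended assembly, and if anything is spelled out more carefully than in the paper.
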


{\bf Remark.} {\it If the functions $f_n(t)$ are continuous and converge uniformly on $[a,b\,]$ to a positive function $f(t)$, then the spectrum of $A$ on $[a,b\,]$ is of course also purely absolutely continuous and $f(t)\in C[a,b]$.}

\section{Application to Jacobi matrices.}

Jacobi matrix is the tridiagonal matrix of the form
\begin{equation}
\label{Jacobi_Matrix}
  \left(\begin{array}{ccccc} a_0&b_0&0&0&\ldots\\
                      b_0&a_1&b_1&0&\ldots\\
                      0&b_1&a_2&b_2&\ldots\\
                      0&0&b_2&a_3&\ldots\\
                      \vdots&\vdots&\vdots&\vdots&\ddots
  \end{array}\right),
\end{equation}
where all the $a_n$ are real and all the $b_n$ positive.

An operator in separable Hilbert space $H$ can be assosiated with this matrix as follows~\cite{6}. Let $\{e_n\}_0^{\infty}$ be an orthonormal basis in $H$. Let us define basically the operator $A$ on the basis vectors $e_n$, according to the matrix representation~(\ref{Jacobi_Matrix}):
\begin{equation}
\label{Operator's_action}
\begin{array}{l}
  Ae_n=b_{n-1}e_{n-1}+a_ne_n+b_ne_{n+1}\,,\quad n=1,2,3,\ldots\\
  Ae_0=a_0e_0+b_0e_1
\end{array}
\end{equation}

Further, the operator $A$ is defined by linearity on all finite vectors, the set of which is dense in $H$.
(Finite vector has a finite number of non-zero components in the basis $\{e_n\}_0^{\infty}$.) Due to symmetricity of Jacobi matrix, for any two finite vectors $f$ and $g$ we have
$$
(Af,g)=(f,Ag)
$$

Therefore, the operator $A$ is symmetric and permits a closure. This minimal closed operator is a subject of the present consideration and it is naturally to save for it the notation $A$.

The operator $A$ can have deficiency indices $(1,1)$ or $(0,0)$. In the second case having the main interest in the present paper, operator $A$ is self-adjoint. A simple sufficient condition for self-adjointness of $A$ is Carleman's condition
\begin{equation}
\label{Charleman's_condtion}
\sum\limits_{n=0}^\infty\frac{1}{b_n}=+\infty
\end{equation}
In the self-adjoint case the spectrum of $A$ is simple and $e_0$ is generating element. The information about the spectrum of a self-adjoint operator $A$ is contaned in function
\begin{equation}
\label{resolvent}
R(\lambda)=((A-\lambda E)^{-1}e_0,e_0)=\int\limits_{-\infty}^{\infty}\frac{d\sigma(t)}{t-\lambda}\,,\quad \sigma(t)=(E_{t}e_0,e_0)\,,
\end{equation}
defined at $\lambda\not\in\sigma(A)$. ($E_{t}$ is the resolution of the identity of the operator $A$). The distribution function $\sigma(t)$ contains complete information about the spectrum. It has the following representation
$$
\sigma(\lambda)=\sigma_d(\lambda)+\sigma_{ac}(\lambda)+\sigma_{sc}(\lambda)\,,
$$
where $\sigma_d(\lambda)$ is a saltus function defining eigenvalues, $\sigma_{ac}(\lambda)$ and $\sigma_{sc}(\lambda)$ are the absolutely continuous and the singularly continuous parts respectively.

For the function $R(\lambda)$ there exists an afficient approximation algorithm based on the continued fractions. Namely, the following representation is valid
\begin{equation}
\label{c.fr.representation}
R(\lambda)=\frac{1}{\displaystyle a_0-\lambda-\frac{(b_0)^2}
{\displaystyle a_1-\lambda-\displaystyle
  \frac{(b_1)^2}{\displaystyle a_2-\lambda-\frac{(b_2)^2}{\displaystyle
  a_3-\lambda-\ldots}}}}
\end{equation}
The possibility of such representation and also the conditions and character of convergence are defined by the following theorem~\cite{6}

\begin{theorem}{(Hellinger)}
\label{Hellinger}
{ If the operator $A$ is self-adjoint then the continued fraction in~(\ref{c.fr.representation}) uniformly converges in any closed bounded domain of $\lambda$ without common points with real axis to the analytic function defined by formula~(\ref{resolvent}).}
\end{theorem}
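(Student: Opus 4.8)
The plan is to analyze the convergents of the continued fraction through the classical Weyl nested-circle construction and then to upgrade pointwise convergence to local uniform convergence by a normal-families argument. First I would identify the $n$-th convergent $R_n(\lambda)$ of the continued fraction in~(\ref{c.fr.representation}) with the ratio $R_n(\lambda)=-Q_n(\lambda)/P_n(\lambda)$ of the second- and first-kind polynomials attached to the Jacobi matrix (in Akhiezer's normalisation), equivalently with the Weyl function $((J_n-\lambda E)^{-1}e_0,e_0)$ of the $n\times n$ truncation $J_n$. Since $J_n$ is a finite self-adjoint matrix, $R_n$ is a rational Nevanlinna function whose poles are the (real) eigenvalues of $J_n$; hence each $R_n$ is analytic on $\mathbb{C}\setminus\mathbb{R}$ and obeys the uniform bound $|R_n(\lambda)|\le 1/|\mathrm{Im}\,\lambda|$. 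In particular the family $\{R_n\}$ is locally bounded, hence normal, on $\mathbb{C}\setminus\mathbb{R}$.

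Next, for a fixed non-real $\lambda$ I would run the Weyl construction. Writing
$$
w_n(\lambda;t)=\frac{-Q_n(\lambda)-t\,Q_{n-1}(\lambda)}{P_n(\lambda)+t\,P_{n-1}(\lambda)}\,,
$$
as the real parameter $t$ traverses $\mathbb{R}\cup\{\infty\}$ the value $w_n$ traces a circle $C_n(\lambda)$ bounding a closed disk $D_n(\lambda)$; the Wronskian identity $P_nQ_{n-1}-P_{n-1}Q_n=\mathrm{const}$ together with the Christoffel--Darboux formula yields the nesting $D_1(\lambda)\supseteq D_2(\lambda)\supseteq\cdots$ and the radius
$$
r_n(\lambda)=\Bigl(2\,|\mathrm{Im}\,\lambda|\sum_{k=0}^{n}|P_k(\lambda)|^2\Bigr)^{-1}.
$$
The convergent $R_n(\lambda)$ lies in $D_n(\lambda)$, and the spectral representation~(\ref{resolvent}) shows that the genuine resolvent element $R(\lambda)$, being a Nevanlinna function with the correct growth, also lies in every $D_n(\lambda)$.

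The crux is the limit-point/limit-circle dichotomy, i.e. the equivalence between self-adjointness of $A$ and the divergence of $\sum_{k=0}^\infty |P_k(\lambda)|^2$. I would argue that the deficiency subspace $\ker(A^*-\bar\lambda)$ consists precisely of the $\ell^2$ solutions of the formal three-term recurrence at $\lambda$; self-adjointness means deficiency indices $(0,0)$, i.e. no nonzero $\ell^2$ solution, which (by the invariance of square-summability of solutions as $\lambda$ varies over $\mathbb{C}\setminus\mathbb{R}$) is equivalent to the first-kind solution $(P_k(\lambda))_k$ failing to be square-summable, that is $\sum_{k}|P_k(\lambda)|^2=\infty$. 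Granting this, $r_n(\lambda)\to 0$, the nested disks shrink to a single point, and since both $R_n(\lambda)$ and $R(\lambda)$ sit in $D_n(\lambda)$ we obtain the quantitative bound $|R_n(\lambda)-R(\lambda)|\le 2\,r_n(\lambda)\to 0$ for every non-real $\lambda$.

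Finally I would upgrade this to the required uniform convergence: the $R_n$ form a locally bounded sequence of analytic functions on $\mathbb{C}\setminus\mathbb{R}$ converging pointwise to $R$, so the Vitali--Porter theorem gives local uniform convergence, hence uniform convergence on every closed bounded domain without common points with the real axis, the limit being the analytic function~(\ref{resolvent}). I expect the main obstacle to be the self-adjointness $\Leftrightarrow$ limit-point step together with the verification that $R(\lambda)$ lies in every Weyl disk: this is precisely where the abstract deficiency-index hypothesis must be translated into the analytic square-summability condition on the polynomials, everything else reducing to the standard circle geometry and a routine normal-families conclusion.
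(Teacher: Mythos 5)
The paper does not prove this theorem at all: it is quoted as a classical result of Hellinger with a citation to Akhiezer's \emph{The Classical Moment Problem} [6], so there is no internal proof to compare against. Your argument is, in outline, exactly the standard proof found in that reference: identification of the convergents $-Q_n(\lambda)/P_n(\lambda)$ with resolvent elements of the finite truncations (hence Nevanlinna functions bounded by $1/|\mathrm{Im}\,\lambda|$), the nested Weyl disks of radius $\bigl(2|\mathrm{Im}\,\lambda|\sum_k|P_k(\lambda)|^2\bigr)^{-1}$, the equivalence of self-adjointness (deficiency indices $(0,0)$) with divergence of $\sum_k|P_k(\lambda)|^2$, and Vitali's theorem to pass from pointwise to locally uniform convergence. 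The outline is sound; the two steps you correctly flag as needing detail are the ones that actually carry the weight, namely that $R(\lambda)=\int\frac{d\sigma(t)}{t-\lambda}$ lies in every Weyl disk (this follows from Bessel's inequality applied to the orthonormal system $\{P_k\}$ in $L^2(d\sigma)$, via the characterization of the disk by the inequality $\sum_{k}|wP_k(\lambda)+Q_k(\lambda)|^2\le \mathrm{Im}\,w/\mathrm{Im}\,\lambda$) and that $\ker(A^*-\bar\lambda)$ is spanned by the single recurrence solution $(P_k(\bar\lambda))_k$, so that self-adjointness is precisely non-square-summability of the first-kind polynomials.
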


The continued fraction convergence here means the existence of the finite limit of the $n$th approximant
$$
  \frac{1}{\displaystyle a_0-\lambda-\frac{b_0^2}{\displaystyle a_1-
            \lambda-\displaystyle
  \frac{b_1^2}{\displaystyle a_2-\lambda-\ldots-\frac{b_{n-2}^2}{
  \displaystyle
  a_{n-1}-\lambda}}}}=-\frac{Q_n(\lambda)}{P_n(\lambda)}\,,
$$
where $P_n(\lambda)$ and $Q_n(\lambda)$ are 1st and 2nd type polynomials respectively. These polynomials form a pair of linearly independent solutions of a second order finite difference equation
\begin{equation}
\label{recurrent_relations}
b_{n-1}\,y_{n-1}+a_n\,y_n+b_n\,y_{n+1}=\lambda\,y_n\,,\quad (n=1,2,3,\ldots)\,,
\end{equation}
with initial conditions
\begin{equation}
\label{initial_conditions}
P_0(\lambda)=1\,,\quad P_1(\lambda)=\frac{\lambda-a_0}{b_0}\,,\quad
Q_0(\lambda)=0\,,\quad Q_1(\lambda)=\frac{1}{b_0}
\end{equation}
The following equality is also valid
\begin{equation}
\label{vronsqian}
P_{n-1}(\lambda)\,Q_n(\lambda)-P_n(\lambda)\,Q_{n-1}(\lambda)=\frac{1}{b_{n-1}}\,,\quad (n=1,2,3,\ldots)
\end{equation}
Thus
$$
R(\lambda)=-\lim\limits_{n\to\infty}\frac{Q_n(\lambda)}{P_n(\lambda)}\,,\quad \mbox{Im}\,\lambda\ne0
$$

Exactly infinite part of the continued fraction (the behavior of $a_n$ and $b_n$ at infinity) defines essentially the spectrum of $A$. Particularly, an absolutely continuous part of spectrum is determined by the behavior of $a_n$ and $b_n$ at infinity too. Therefore we need to allocate an infinite part of the continued fraction starting from arbitrary place.

\begin{lemma}
\label{lemma1}
Denote by $K_n(\lambda)$ the infinite part ("tail") of continued fraction~(\ref{c.fr.representation}) starting with $n$th element of sequences $a_n$ and $b_n$
\begin{equation}
\label{fracion_for_K_n}
K_n(\lambda)=\frac{1}{\displaystyle a_n-\lambda-\frac{(b_n)^2}
{\displaystyle a_{n+1}-\lambda-\displaystyle
  \frac{(b_{n+1})^2}{\displaystyle a_{n+2}-\lambda-\frac{(b_{n+2})^2}{\displaystyle
  a_{n+3}-\lambda-\ldots}}}}
\end{equation}
Then
$$
R(\lambda)=-\frac{Q_n(\lambda)+Q_{n-1}(\lambda)\,b_{n-1}K_n(\lambda)}{P_n(\lambda)+P_{n-1}(\lambda)\,b_{n-1}K_n(\lambda)}\,,\quad
(n=1,2,3,\ldots)
$$
\end{lemma}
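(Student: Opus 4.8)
The plan is to prove the identity by induction on $n$, exploiting the self-similar structure of the continued fraction rather than manipulating the infinite object directly. The starting observation is that the definition~(\ref{fracion_for_K_n}) immediately yields the one-step recursion
$$
K_n(\lambda)=\frac{1}{a_n-\lambda-b_n^2\,K_{n+1}(\lambda)}\,,
$$
and in particular $R(\lambda)=1/(a_0-\lambda-b_0^2K_1(\lambda))$, since the whole fraction~(\ref{c.fr.representation}) is obtained from its tail $K_1$ by prepending a single level. That $K_n(\lambda)$ is a well-defined finite quantity for $\mathrm{Im}\,\lambda\neq0$ follows from Hellinger's Theorem~(\ref{Hellinger}) applied to the Jacobi matrix whose diagonals are the shifted sequences $a_n,a_{n+1},\dots$ and $b_n,b_{n+1},\dots$, so that no convergence issue obstructs the subsequent algebra.

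For the base case $n=1$ I would substitute the initial data~(\ref{initial_conditions}) into the right-hand side of the claimed formula. Using $P_0=1$, $Q_0=0$, $P_1=(\lambda-a_0)/b_0$, $Q_1=1/b_0$ and clearing the factor $b_0$, the expression collapses to $1/(a_0-\lambda-b_0^2K_1(\lambda))$, which equals $R(\lambda)$ by the observation above.

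For the inductive step, assuming the formula at level $n$, I would insert $K_n=1/(a_n-\lambda-b_n^2K_{n+1})$ and clear the inner denominator, writing the right-hand side as
$$
-\frac{Q_n(a_n-\lambda-b_n^2K_{n+1})+b_{n-1}Q_{n-1}}{P_n(a_n-\lambda-b_n^2K_{n+1})+b_{n-1}P_{n-1}}\,.
$$
The key is now to invoke the recurrence~(\ref{recurrent_relations}): both $P$ and $Q$ satisfy $b_n y_{n+1}=(\lambda-a_n)y_n-b_{n-1}y_{n-1}$, which is exactly what is needed to replace $(a_n-\lambda)Q_n$ by $-b_nQ_{n+1}-b_{n-1}Q_{n-1}$ (and likewise for $P$). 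After this substitution the $b_{n-1}Q_{n-1}$ and $b_{n-1}P_{n-1}$ terms cancel, a common factor $-b_n$ drops out of numerator and denominator, and one is left with $-(Q_{n+1}+b_nK_{n+1}Q_n)/(P_{n+1}+b_nK_{n+1}P_n)$, which is precisely the formula at level $n+1$.

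I do not expect a genuine obstacle here: the statement is an algebraic identity, and the only care required is the bookkeeping of signs in the recurrence and the verification that the cancellation of $b_{n-1}Q_{n-1}$ (respectively $b_{n-1}P_{n-1}$) against the term produced by the recurrence is exact. The one point worth isolating, rather than a difficulty, is the appeal to Theorem~(\ref{Hellinger}) guaranteeing that $K_n(\lambda)$ exists as a finite number for non-real $\lambda$, so that the displayed manipulations are manipulations of genuine complex numbers and not merely formal.
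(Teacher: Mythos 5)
Your proof is correct and follows essentially the same route as the paper: induction on $n$, using the tail recursion $K_n(\lambda)=1/(a_n-\lambda-b_n^2K_{n+1}(\lambda))$ together with the three-term recurrence~(\ref{recurrent_relations}) to pass from one level to the next, with the base case checked from the initial conditions~(\ref{initial_conditions}). The only differences are cosmetic: you index the step as $n\to n+1$ rather than $n-1\to n$, and you add an explicit justification (via Theorem~\ref{Hellinger}) that $K_n(\lambda)$ is a well-defined finite quantity for $\mathrm{Im}\,\lambda\neq0$, a point the paper leaves implicit.
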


\begin{proof} The proof is obtained by induction. The validity of this formula for $n=1$ follows from initial conditions ~(\ref{initial_conditions}) for polynomials  $P_n$ and $Q_n$. Assuming the validity of formula
$$
R(\lambda)=-\frac{Q_{n-1}+Q_{n-2}\,b_{n-2}K_{n-1}}{P_{n-1}+P_{n-2}\,b_{n-2}K_{n-1}}\,,
$$
let us prove the same one for the value of index greater on unit. Expressing $K_{n-1}$ through $K_n$, we obtain
$$
R(\lambda)=-\frac{\displaystyle Q_{n-1}+Q_{n-2}\,b_{n-2}\,\frac{1}{\displaystyle a_{n-1}-\lambda-(b_{n-1})^2\,K_{n}}}
{\displaystyle P_{n-1}+P_{n-2}\,b_{n-2}\,\frac{1}{\displaystyle a_{n-1}-\lambda-(b_{n-1})^2\,K_{n}}}=
$$
$$
=-\frac{(a_{n-1}-\lambda)Q_{n-1}+Q_{n-2}\,b_{n-2}-Q_{n-1}\,(b_{n-1})^2\,K_{n-1}}
{(a_{n-1}-\lambda)P_{n-1}+P_{n-2}\,b_{n-2}-P_{n-1}\,(b_{n-1})^2\,K_{n-1}}
$$
Using~(\ref{recurrent_relations}), we find
$$
R(\lambda)=-\frac{-Q_n\,b_{n-1}-Q_{n-1}\,(b_{n-1})^2\,K_{n-1}}
{-P_n\,b_{n-1}-P_{n-1}\,(b_{n-1})^2\,K_{n-1}}=
-\frac{Q_n+Q_{n-1}\,b_{n-1}K_n}{P_n+P_{n-1}\,b_{n-1}K_n}
$$
Thus the formula is valid for all natural $n$.
\end{proof}

To construct an approximating sequence of operators $A_n$ one can choose the infinite part $K_n(\lambda)$ such that the spectrum of $A_n$ is absolutely continuous on some interval $[a,b\,]$ and, on the other hand, $K_n(\lambda)$ is rather simple and convenient for given $A$.

Choose the approximating sequence $A_n$ as follows
\begin{equation}
\label{A_n}
A_n=\left(\begin{array}{cccccccc} a_0&b_0&\ldots&0&0&0&0&\ldots\\
                      b_0&a_1&\ldots&0&0&0&0&\ldots\\
                      \vdots&\vdots&\ddots&\vdots&\vdots&\vdots&\vdots&\vdots\\
                      0&0&\ldots&a_{n-1}&b_{n-1}&0&0&\ldots\\
                      0&0&\ldots&b_{n-1}&a_n&b_n&0&\ldots\\
                      0&0&\ldots&0&b_n&a_n&b_n&\ldots\\
                      0&0&\ldots&0&0&b_n&a_n&\ldots\\
                      \vdots&\vdots&\vdots&\vdots&\vdots&\vdots&\vdots&\ddots
  \end{array}\right)\,,\quad (n=0,1,\ldots)
\end{equation}
The matrix $A_n$ differs from $A$ by that the elements of the diagonals are not changed starting with $n$rh number. If we denote the elements of the diagonal sequences of $A_n$ by $a^{(n)}_k$ and $b^{(n)}_k$, then
$$
a_k^{(n)}=\left[\begin{array}{c} a_k\,,\quad k<n\\
                                 a_n\,,\quad k\ge n
                \end{array}\right.\,,\quad\quad\quad
                b_k^{(n)}=\left[\begin{array}{c} b_k\,,\quad k<n\\
                                 b_n\,,\quad k\ge n
                \end{array}\right.
$$

Such matrices $A_n$ were used for approximation in works~\cite{4,17,18,19}. Note that they were used also in work~\cite{11}, where the absolutely continuous spectrum of Jacobi matrices also was considered, but in connection with commutation relations rather than with approximation.

It is clear that the matrices $A_n$ describe a sequence of bounded ( $D(A_n)=H$ ) self-adjoint operators (Carleman's condition~(\ref{Charleman's_condtion}) is obviously fulfilled) and it is natural to save the same notation for it.

The sequence $A_n$ strongly converges to the operator $A$ on the set of finite vectors since for any finite vector the difference $A-A_n$ vanishes, provided $n$ is sufficiently large. To use the Theorem~(\ref{main_theorem}) we should state the conditions under which the spectrum of $A_n$ is absolutely continuous. Note that the spectrum of the operators $A_n$ and $A$ is simple and all information about the spectrum contains in the functions $\sigma_n(\lambda)=(E_\lambda^{(n)} e_0, e_0)$ and $\sigma(\lambda)=(E_\lambda e_0, e_0)$ (generating vector $e_0$ is the same for $A$ and $A_n$). Below we will state the absolute continuity conditions of the spectrum of $A_n$. We will give also an explicit expression for the functions $f_n(t)=\sigma'_n(t)$. It is possible to do with the help of continued fraction approach.

Denote by $R_n(\lambda)$ the function $R(\lambda)$ for the operator $A_n$ defined by~(\ref{resolvent}).
Applying the Lemma~(\ref{lemma1}) and taking into account the definition of the operators $A_n$, we can present $R_n(\lambda)$ in the form
\begin{equation}
\label{R_n}
R_n(\lambda)=-\frac{Q_n(\lambda)+Q_{n-1}(\lambda)\,b_{n-1}K_n(\lambda)}{P_n(\lambda)+P_{n-1}(\lambda)\,b_{n-1}K_n(\lambda)}\,,
\end{equation}
where
\begin{equation}
\label{hvost}
K_n(\lambda)=\frac{1}{\displaystyle a_n-\lambda-\frac{(b_n)^2}
{\displaystyle a_{n}-\lambda-\displaystyle
  \frac{(b_{n})^2}{\displaystyle a_{n}-\lambda-\frac{(b_{n})^2}{\displaystyle
  a_{n}-\lambda-\ldots}}}}
\end{equation}

Continued fraction for $K_n(\lambda)$ describes an infinite part of the matrix of the operators $A_n$ in which the elements of the sequences are constant and equal $a_n$ and $b_n$. The function $K_n(\lambda)$ can be regarded as a matrix element of the resolvent of the operator represented by Jacobi matrix $J_n$ with constant elements along the diagonals
$$
J_n=\left(\begin{array}{ccccc} a_n&b_n&0&0&\ldots\\
                      b_n&a_n&b_n&0&\ldots\\
                      0&b_n&a_n&b_n&\ldots\\
                      0&0&b_n&a_n&\ldots\\
                      \vdots&\vdots&\vdots&\vdots&\ddots
  \end{array}\right)
$$
This Jacobi matrix is naturally connected with Tchebychev's polynomials, and the function $K_n(\lambda)$ can be found explicitly.
Note first that the operator $J_n$ is self-adjoint and by the theorem~(\ref{Hellinger}) the continued fraction~(\ref{hvost}) converges  to the function $K_n(\lambda)$ for any non-real $\lambda$. Write next the identity for the $k$th and $(k-1)$th approximants which follows readily from the structure of the continued fraction~(\ref{hvost})
$$
K_n^{(k)}(\lambda)=\frac{1}{\displaystyle a_n-\lambda-(b_n)^2\,K_n^{(k-1)}(\lambda)}
$$
Letting here $\mbox{Im}\,\lambda\ne0$ and passing to the limit, $k\to\infty$, one obtains the simple equation for $K_n(\lambda)$
$$
K_n(\lambda)=\frac{1}{\displaystyle a_n-\lambda-(b_n)^2\,K_n(\lambda)}\,,
$$
which has a solution
$$
K_n(\lambda)=\frac{a_n-\lambda+\sqrt{(a_n-\lambda)^2-4(b_n)^2}}{2\,(b_n)^2}
$$
One of the two branches of the square root is chosen from additional condition
$$
\mbox{Im}\,K_n(\lambda)>0\,,\quad\mbox{Im}\,\lambda>0\,,
$$
which is valid for a matrix element of the resolvent of any self-adjoint operator~\cite{1}. Using this one obtains for any $x\in\mathbb R$
\begin{equation}
\label{limit_K_n1}
\lim\limits_{\epsilon\to+0}K_n(x+i\epsilon)=D_n(x)+i\,B_n(x)
\end{equation}
\begin{equation}
\label{limit_K_n2}
D_n(x)=\left[\begin{array}{cr}\frac{\displaystyle a_n-x}{\displaystyle 2\,(b_n)^2}\,,& |a_n-x|\le2b_n\\
                             \frac{\displaystyle a_n-x\pm\sqrt{(a_n-x)^2-4(b_n)^2}}{\displaystyle 2\,(b_n)^2}\,,& |a_n-x|\ge2b_n
             \end{array}\right.
\end{equation}
\begin{equation}
\label{limit_K_n3}
B_n(x)=\left[\begin{array}{cr}\frac{\displaystyle\sqrt{4(b_n)^2-(a_n-x)^2}}{\displaystyle 2\,(b_n)^2}\,,& |a_n-x|\le2b_n\\
                             0\,,& |a_n-x|\ge2b_n
             \end{array}\right.
\end{equation}
In the expression for $D_n(x)$ the sign $"$+$"$ before the square root is taken when $x-a_n>2b_n$, the sing $"$$-$$"$ when $a_n-x>2b_n$.

From~(\ref{limit_K_n1}) - (\ref{limit_K_n3}) it follows that the spectrum of $J_n$ is absolutely continuous and concentrated on the interval $\sigma(J_n)=[a_n-2b_n,a_n+2b_n]$. The function $K_n(\lambda)$ is the matrix element of the resolvent of self-adjoint operator $J_n$. Hence for the function $K_n(\lambda)$ one has the integral representation
$$
K_n(\lambda)=\int\limits_{-\infty}^{\infty}\frac{f_{J_n}(x)\,dx}{x-\lambda}=
\frac{1}{2\pi (b_n)^2}\int\limits_{a_n-2b_n}^{a_n+2b_n}\frac{\sqrt{4(b_n)^2-(a_n-x)^2}\,dx}{x-\lambda}\,,
$$
where
\begin{equation}
\label{f_J_n}
f_{J_n}(x)=\left[\begin{array}{cr}\frac{\displaystyle\sqrt{4(b_n)^2-(a_n-x)^2}}{\displaystyle 2\pi\,(b_n)^2}\,,& |a_n-x|\le2b_n\\
                             0\,,& |a_n-x|>2b_n
             \end{array}\right.
\end{equation}
The function $f_{J_n}(x)$ at $a_n=0$, $b_n=1/2$ is the spectral weight for Tchebychev's polynomials~\cite{8,9}. The element $a_n$ defines the center of the spectrum and $b_n$ defines the width of the spectrum. The center and the width of the spectrum change according to the behavior of $a_n$ and $b_n$.

Prove that under some conditions the spectrum of $A_n$ is the same as the spectrum of $J_n$.

\begin{theorem}
\label{spectr_A_n}
 Inside the interval $I_n=[a_n-2b_n,a_n+2b_n]$ the spectrum of the operator $A_n$ is always absolutely continuous. The spectral weight $f_n(x)$ with accuracy to the values on a set of measure zero is defined by
\begin{equation}
\label{f_n(x)_3.6}
f_n(x)=\frac{\frac{\displaystyle 1}{\displaystyle \pi}\,B_n(x)}{P^2_n(x)-\frac{\displaystyle b_{n-1}}{\displaystyle b_n}P_{n-1}(x)P_{n+1}(x)}=
\frac{f_{J_n}(x)}{P^2_n(x)-\frac{\displaystyle b_{n-1}}{\displaystyle b_n}P_{n-1}(x)P_{n+1}(x)}\,,
\end{equation}
where $f_{J_n}(x)$ is spectral weight of $J_n$.

If for any $n\in\mathbb N$ the inequalities
$$
b_{n}\ge b_{n-1}\,,\quad |a_n-a_{n-1}|\le2(b_n-b_{n-1})
$$
hold, then the spectrum of $A_n$ is purely absolutely continuous, concentrated on the interval $I_n$ and $I_n\subseteq I_{n+1}$.
\end{theorem}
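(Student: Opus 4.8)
The plan is to prove the two assertions in turn: the density formula follows from Stieltjes--Perron inversion of $R_n$, while the containment and purity of the spectrum are obtained from an operator-norm estimate rather than from a direct pole analysis of the resolvent.

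\emph{The density formula.} I would begin from the Stieltjes--Perron inversion: the absolutely continuous part of $\sigma_n$ has density $f_n(x)=\frac{1}{\pi}\,\mathrm{Im}\,R_n(x+i0)$ wherever this boundary value exists finitely, the singular part being carried by the set where $\mathrm{Im}\,R_n(x+i0)=+\infty$. Into~(\ref{R_n}) I substitute $K_n(x+i0)=D_n(x)+iB_n(x)$ from~(\ref{limit_K_n1})--(\ref{limit_K_n3}) and rationalise, writing numerator and denominator of $R_n$ as $(\mathrm{real})+i(\mathrm{real})$ (all $P_k,Q_k$ are real for real argument). A short computation shows that the numerator of $\mathrm{Im}\,R_n$, after rationalising, equals up to sign $b_{n-1}B_n\,(P_{n-1}Q_n-P_nQ_{n-1})$, which collapses by the Wronskian identity~(\ref{vronsqian}) to $\pm B_n$ --- this single cancellation is the only nonroutine point of the first part. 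The denominator is $|P_n+P_{n-1}b_{n-1}K_n|^2$; inside $I_n$ one has $D_n^2+B_n^2=1/b_n^2$, so it equals $P_n^2+\frac{b_{n-1}(a_n-x)}{b_n^2}P_{n-1}P_n+\frac{b_{n-1}^2}{b_n^2}P_{n-1}^2$. Since the entries of $A$ and $A_n$ coincide through index $n$, the polynomials $P_{n-1},P_n,P_{n+1}$ satisfy the recurrence~(\ref{recurrent_relations}) at index $n$, i.e. $b_nP_{n+1}=(x-a_n)P_n-b_{n-1}P_{n-1}$; eliminating $(a_n-x)P_n$ turns the denominator into $P_n^2-\frac{b_{n-1}}{b_n}P_{n-1}P_{n+1}$, which is exactly~(\ref{f_n(x)_3.6}), and $\frac{1}{\pi}B_n=f_{J_n}$ by~(\ref{f_J_n}) gives the second form. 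On the open interval this denominator cannot vanish, since $P_{n-1}(x)=P_n(x)=0$ would contradict~(\ref{vronsqian}); hence $\mathrm{Im}\,R_n(x+i0)$ is finite and continuous there and $\sigma_n$ is purely absolutely continuous on $(a_n-2b_n,a_n+2b_n)$ with density $f_n$.

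\emph{Nesting and containment.} The inclusion $I_n\subseteq I_{n+1}$ is equivalent to $a_{n+1}-2b_{n+1}\le a_n-2b_n$ and $a_n+2b_n\le a_{n+1}+2b_{n+1}$, i.e. to $|a_{n+1}-a_n|\le 2(b_{n+1}-b_n)$ together with $b_{n+1}\ge b_n$, which are precisely the hypotheses; iterating yields $I_0\subseteq I_1\subseteq\cdots\subseteq I_n$. The heart of the second part is the inclusion $\sigma(A_n)\subseteq I_n$, which I would obtain from the operator inequality $\|A_n-a_nI\|\le 2b_n$. Writing $\beta_k=b_k^{(n)}$ one has
$$
\langle(A_n-a_nI)u,u\rangle=\sum_k(a_k^{(n)}-a_n)|u_k|^2+2\sum_k\beta_k\,\mathrm{Re}(u_{k+1}\bar u_k).
$$
From $I_k\subseteq I_n$ (for $k<n$, with equality for $k\ge n$) one checks $a_k^{(n)}-a_n-2b_n\le-2\beta_k$ and $a_n-a_k^{(n)}-2b_n\le-2\beta_k$, while $b_k^{(n)}$ is nondecreasing in $k$, i.e. $\beta_{k-1}\le\beta_k$. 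Combining the first coefficient bound with $2\,\mathrm{Re}(u_{k+1}\bar u_k)\le|u_k|^2+|u_{k+1}|^2$ and reindexing gives $\langle(A_n-a_nI)u,u\rangle-2b_n\|u\|^2\le-\beta_0|u_0|^2+\sum_{j\ge1}(\beta_{j-1}-\beta_j)|u_j|^2\le0$; the symmetric estimate with $-2\,\mathrm{Re}(u_{k+1}\bar u_k)\le|u_k|^2+|u_{k+1}|^2$ gives the matching lower bound. Hence $-2b_nI\le A_n-a_nI\le 2b_nI$ and $\sigma(A_n)\subseteq I_n$.

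\emph{Purity.} From $\sigma(A_n)\subseteq I_n$ the density $f_n$ vanishes almost everywhere outside $I_n$, and by the first part $\sigma_n$ is purely absolutely continuous on the open interval; the only points of $I_n$ left to control are the two endpoints $a_n\pm2b_n$, where only atoms could occur. At $x_0=a_n\pm2b_n$ the tail recurrence has the constant coefficients $a_n,b_n$ and characteristic equation $b_nt^2+(a_n-x_0)t+b_n=0$ with the double root $t=\pm1$ respectively; every nonzero tail solution therefore grows like $(A+Bk)(\pm1)^k$ and is not in $\ell^2$, so $x_0$ is not an eigenvalue. Consequently $A_n$ has no singular spectrum and $\sigma(A_n)=\sigma_{ac}(A_n)\subseteq I_n$, which is the assertion. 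I expect the containment $\sigma(A_n)\subseteq I_n$ to be the main obstacle: carrying it out through the poles of $R_n$ outside $I_n$ is awkward, whereas the quadratic-form estimate above uses the two hypotheses in an essential and transparent way and bypasses the resolvent entirely.
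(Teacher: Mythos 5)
Your proof is correct, and its second half takes a genuinely different route from the paper's. The first half --- Stieltjes--Perron inversion of (\ref{R_n}), the Wronskian cancellation (\ref{vronsqian}) that reduces the numerator of $\mathrm{Im}\,R_n(x+i0)$ to $B_n(x)$, the identity $D_n^2+B_n^2=1/b_n^2$ inside $I_n$, and the use of the recurrence (\ref{recurrent_relations}) to turn the denominator into $P_n^2-\frac{b_{n-1}}{b_n}P_{n-1}P_{n+1}$, together with the observation that this cannot vanish on the open interval because $P_{n-1}$ and $P_n$ have no common zero --- is essentially identical to the paper's computation leading to (\ref{limit_R_n}) and (\ref{f_n(x)_3.6}). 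Where you diverge is the exclusion of singular spectrum. The paper stays inside the resolvent: for $|x-a_n|\ge 2b_n$ it uses $|D_n(x)|\le 1/b_n$ and $b_{n-1}\le b_n$ to get $|P_n+b_{n-1}P_{n-1}D_n|\ge |P_n|-|P_{n-1}|$, then shows by downward induction (the hypothesis $|a_k-a_{k-1}|\le 2(b_k-b_{k-1})$ guarantees that $|x-a_n|\ge 2b_n$ forces $|x-a_k|\ge 2b_k$ for all $k\le n$) that $|P_n(x)|-|P_{n-1}(x)|\ge |P_1(x)|-|P_0(x)|\ge 1$, so the denominator of (\ref{R_n}) is bounded below by $1$ off the open interval and $R_n$ has no poles or boundary singularities there. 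You instead prove the operator inequality $\|A_n-a_nI\|\le 2b_n$ by a quadratic-form (Schur-type) estimate, which yields $\sigma(A_n)\subseteq I_n$ at once, and then rule out atoms at the two endpoints by noting that the constant-coefficient tail recurrence has a double characteristic root of modulus one, so no nonzero tail solution is square-summable. Both arguments are complete; yours is shorter and makes the role of the two hypotheses transparent (they are exactly the nesting $I_k\subseteq I_n$ fed into the form estimate), while the paper's yields the explicit quantitative bound $|P_n(x)|-|P_{n-1}(x)|\ge 1$ outside $I_n$. One sentence worth adding to your endpoint step: an eigenvector whose tail solution is identically zero must vanish everywhere (backward substitution, since all $b_k>0$), so ``nonzero tail solution'' indeed covers every candidate eigenvector.
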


\begin{proof} Prove first that the absolutely continuous part of the spectrum of $A_n$ coincides with the spectrum of
$J_n$. For this purpose transform the expression~(\ref{R_n}) for $R_n(\lambda)$
$$
R_n(\lambda)=-\frac{Q_{n-1}(\lambda)}{P_{n-1}(\lambda)}-\frac{Q_{n}(\lambda)\,P_{n-1}(\lambda)-Q_{n-1}(\lambda)\,P_{n}(\lambda)}
{P_{n-1}(\lambda)\,(P_n(\lambda)+P_{n-1}(\lambda)\,b_{n-1}K_n(\lambda))}
$$
or, using~(\ref{vronsqian}),
$$
R_n(\lambda)=-\frac{Q_{n-1}(\lambda)}{P_{n-1}(\lambda)}-\frac{1}{b_{n-1}\,P_{n-1}(\lambda)\,
(P_n(\lambda)+P_{n-1}(\lambda)\,b_{n-1}K_n(\lambda))}
$$
Let $\lambda=x+i\epsilon\,,\:x\in\mathbb R$. Using~(\ref{limit_K_n1}), (\ref{limit_K_n3}) one obtains
\begin{equation}
\label{limit_R_n}
\lim\limits_{\epsilon\to+0}\mbox{Im}\,R_n(x+i\epsilon)=\frac{B_n(x)}
{(P_n(x)+b_{n-1}\,P_{n-1}(x)\,D_n(x))^2+(b_{n-1}\,P_{n-1}(x)\,B_n(x))^2}
\end{equation}
It is easy to note that the denominator of the fraction~(\ref{limit_R_n}) nowhere within the interval of the absolutely continuous spectrum of $J_n$ is zero. Indeed, if $|x-a_n|<2b_n$, then the denominator may be zero if and only if $x$ is a common zero of $P_n$ and $P_{n-1}$. But this is impossible since $P_n$ and $P_{n-1}$ have no common zeros~\cite{6}.

Thus from the limit expression~(\ref{limit_R_n}) it follows readily that the absolutely continuous parts of the spectrum of $A_n$ and $J_n$ coincide. Prove that under the conditions of the theorem the singular part of the spectrum of $A_n$ is absent. For this purpose, return to the formula~(\ref{R_n}). The numerator and the denominator in this formula are analytic functions on the set
$\mathbb C\setminus\sigma(J_n)$. Therefore the point $x\in\mathbb R$ belongs to the singular part of the spectrum of $A_n$ for $|x-a_n|\le2b_n$ if and only if
\begin{equation}
\label{3.3}
\begin{array}{l}
\lim\limits_{\epsilon\to+0} (P_n(x+i\epsilon)+P_{n-1}(x+i\epsilon)\,b_{n-1}\,K_n(x+i\epsilon))=\\
=P_n(x)+P_{n-1}(x)\,b_{n-1}D_n(x)+i\,P_{n-1}(x)\,b_{n-1}B_n(x)=0
\end{array}
\end{equation}
and for $|x-a_n|>2b_n$ if and only if $x$ is a zero of the denominator:
\begin{equation}
\label{3.4}
P_n(x)+P_{n-1}(x)\,b_{n-1}K_n(x)=P_n(x)+P_{n-1}(x)\,b_{n-1}D_n(x)=0
\end{equation}
But as it was shown before, the equality~(\ref{3.3}) is impossible for $|x-a_n|<2b_n$. It remains to consider the case $|x-a_n|\ge2b_n$. In this case the conditions~(\ref{3.3}) and~(\ref{3.4}) coincide since $B_n(x)=0$.

Thus, let $|x-a_n|\ge2b_n$. From the definition (\ref{limit_K_n2}) of $D_n(x)$ it follows that $\forall x\in\mathbb R$
$$
|D_n(x)|\le\frac{1}{b_n}
$$
Using this estimate and the assumed monotonicity of the sequence $b_n$ one obtains
\begin{equation}
\label{3.5}
\begin{array}{l}
|P_n(x)+P_{n-1}(x)\,b_{n-1}D_n(x)|\ge|P_n(x)|-|P_{n-1}(x)|b_{n-1}|D_n(x)|\ge\\
\ge|P_n(x)|-|P_{n-1}(x)|\,\frac{\displaystyle b_{n-1}}{\displaystyle b_n}\ge |P_n(x)|-|P_{n-1}(x)|
\end{array}
\end{equation}
On the other hand, from the recurrence relations~(\ref{recurrent_relations}) for the polynomials $P_n$ one has
$$
|P_{n}(x)|=\left|\frac{(x-a_{n-1})P_{n-1}(x)-b_{n-2}P_{n-2}(x)}{b_{n-1}}\right|\ge\frac{|x-a_{n-1}|}{b_{n-1}}|P_{n-1}(x)|-
\frac{b_{n-2}}{b_{n-1}}|P_{n-2}|
$$
If $|x-a_n|\ge2b_n$, then by virtue of the theorem condition $|a_n-a_{n-1}|\le2(b_n-b_{n-1})$, the inequality $|x-a_{n-1}|\ge2b_{n-1}$ also holds. Therefore from the last inequality one obtains
$$
|P_{n}(x)|\ge2|P_{n-1}(x)|-|P_{n-2}(x)|
$$
or
$$
|P_{n}(x)|-|P_{n-1}(x)|\ge|P_{n-1}(x)|-|P_{n-2}(x)|
$$
Continuing this process by induction one obtains
$$
|P_{n}(x)|-|P_{n-1}(x)|\ge|P_{1}(x)|-|P_{0}(x)|\ge2-1=1
$$
Finally, substituting this inequality into~(\ref{3.5}) one arrives at the following estimate of the denominator in the formula~(\ref{R_n}) in the case
$|x-a_n|\ge2b_n$
$$
|P_n(x)+P_{n-1}(x)\,b_{n-1}D_n(x)|\ge1\,,
$$
whence it follows the absence in this area the points of a singular spectrum of $A_n$.

It remains to prove the formula~(\ref{f_n(x)_3.6}). Using~(\ref{f_J_n}) and (\ref{limit_R_n}) one has
$$
f_n(x)=\frac{1}{\pi}\lim\limits_{\epsilon\to+0}\mbox{Im}\,R_n(x+i\epsilon)=\frac{\frac{\displaystyle 1}{\displaystyle \pi}\,B_n(x)}
{(P_n(x)+b_{n-1}\,P_{n-1}(x)\,D_n(x))^2+(b_{n-1}\,P_{n-1}(x)\,B_n(x))^2}=
$$
$$
=\frac{f_{J_n}(x)}{(P_n(x)+b_{n-1}\,P_{n-1}(x)\,D_n(x))^2+(b_{n-1}\,P_{n-1}(x)\,B_n(x))^2}
$$
Transform the denominator. When $|x-a_n|\le2b_n$, using~(\ref{limit_K_n2}), (\ref{limit_K_n3}), (\ref{recurrent_relations}), one has
$$
(P_n(x)+b_{n-1}\,P_{n-1}(x)\,D_n(x))^2+(b_{n-1}\,P_{n-1}(x)\,B_n(x))^2=
$$
$$
=\left(P_n+\frac{a_n-x}{2b_n^2}b_{n-1}P_{n-1}\right)^2
+P_{n-1}^2b_{n-1}^2\frac{4b_n^2-(a_n-x)^2}{4b_n^4}=
$$
$$
=P_n^2+\frac{a_n-x}{b_n^2}b_{n-1}P_{n-1}P_n+\frac{b_{n-1}^2}{b_n^2}P_{n-1}^2=P_n^2+P_{n-1}\frac{b_{n-1}}{b_n^2}((a_n-x)P_n+
b_{n-1}P_{n-1})=
$$
$$
=P_n^2-\frac{b_{n-1}}{b_n}P_{n-1}P_{n+1}
$$
The theorem is proved.
\end{proof}

Additional condition of the theorem means some subordination of $a_n$ to $b_n$. Indeed, for $k\in{\mathbb N}$, one has
$$
|a_k|-|a_{k-1}|\le|a_k-a_{k-1}|\le2(b_k-b_{k-1})
$$
Adding these inequalities for $k=1,2,\ldots,n$, one obtains
$$
|a_n|\le|a_0|+2(b_n-b_0)
$$

Note also that the expression
$$
P_n^2(x)-\frac{b_{n-1}}{b_n}P_{n-1}(x)P_{n+1}(x)
$$
appears probably first time in work~\cite{20}. In~\cite{21} it is also called "Turan determinant" although the classical Turan determinant~\cite{24} is
$$
P_n^2(x)-P_{n-1}(x)P_{n+1}(x)=\left|\begin{array}{cc} P_n(x) & P_{n+1}(x)\\
                                                      P_{n-1}(x) & P_n(x)
                                    \end{array}\right|
$$

\vspace{0.8cm}

{\bf Definition.} {\it Let us call the system of intervals $I_n$ {\it centered} on some interval $[a,b\,]$ if this interval is contained inside the interval $I_n$ for all sufficiently large values of~$n$.}\\

Now, when the absolute continuity conditions for the spectrum of $A_n$ are stated, we can apply the theorem~(\ref{main_theorem}). Combining the results of the theorems~(\ref{main_theorem}) and~(\ref{spectr_A_n}) and using the Definition, one obtains

\begin{theorem}
\label{Jacobi_criterion_of_absolutely_continuity}
Assume that the operator $A$ which is represented by~(\ref{Jacobi_Matrix}) is self-adjoint and the system of intervals $I_n$ is centered on some interval $[a,b\,]$. Let
$$
f_n(x)=\frac{\sqrt{4(b_n)^2-(a_n-x)^2}}{2\pi\,b_n\left[b_nP^2_n(x)-b_{n-1}P_{n-1}(x)P_{n+1}(x)\right]}\,,\quad x\in [a,b\,]
$$
Assume that there exist a constant $C>0$ and a positive function $g(x)\in L_p[a,b\,]$ ($p\ge1$) such that for all sufficiently large $n$ and almost all $x\in[a,b\,]$
$$
C\le f_n(x)\le g(x)
$$
Then the spectrum of the operator $A$ is purely absolutely continuous on $[a,b\,]$. The sequence $\sigma_n(\lambda)$ uniformly on $[a,b\,]$ converges to $\sigma(\lambda)$,
$$
\sigma(\lambda)=\sigma(a)+\int\limits_a^\lambda f(x)\,dx\,,
$$
and $f(x)\in L_p[a,b\,]$.
\end{theorem}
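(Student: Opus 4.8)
The statement is essentially an assembly result: it follows by feeding the explicit spectral data supplied by Theorem~\ref{spectr_A_n} into the abstract convergence result Theorem~\ref{main_theorem}. The plan is therefore to take as approximating sequence the operators $A_n$ defined in~(\ref{A_n}), to verify one by one the hypotheses of Theorem~\ref{main_theorem} for this sequence, and then simply to quote its conclusion. First I would record the framework data, all of which are already in place: each $A_n$ is a bounded self-adjoint operator (Carleman's condition~(\ref{Charleman's_condtion}) holds trivially), the closure $A$ of the Jacobi form is self-adjoint by assumption with simple spectrum and generating vector $e_0$, and $A_n\to A$ strongly on the dense set $\Phi$ of finite vectors, since $A-A_n$ annihilates any fixed finite vector once $n$ is large enough.

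The crux is to identify the $f_n(x)$ appearing in the statement with the spectral density $\frac{d}{dx}(E_x^{(n)}e_0,e_0)$ of $A_n$ on $[a,b]$ and to guarantee that $\sigma_n$ is genuinely absolutely continuous there. Here I would use that $[a,b]$ is \emph{centered} on the system $\{I_n\}$: by the Definition there is an $N$ with $[a,b]\subset\mathrm{int}\,I_n$ for all $n\ge N$. For such $n$ the first assertion of Theorem~\ref{spectr_A_n} gives that the spectrum of $A_n$ inside $I_n$, in particular on $[a,b]$, is absolutely continuous, and formula~(\ref{f_n(x)_3.6}) gives its density. A short algebraic simplification of~(\ref{f_n(x)_3.6}), using $f_{J_n}(x)=\sqrt{4b_n^2-(a_n-x)^2}/(2\pi b_n^2)$ and clearing the factor $b_n$, shows that this density coincides exactly with the $f_n(x)$ written in the present theorem. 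I would stress that only the first sentence of Theorem~\ref{spectr_A_n} is used: the monotonicity conditions $b_n\ge b_{n-1}$ and $|a_n-a_{n-1}|\le 2(b_n-b_{n-1})$ serve only to exclude singular spectrum \emph{outside} $I_n$ and play no role on $[a,b]$.

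With the density identified, the hypotheses $C\le f_n(x)\le g(x)$ for all large $n$ and almost every $x\in[a,b]$, with $g\in L_p[a,b]$, are precisely the standing assumptions of Theorem~\ref{main_theorem}; the lower bound $C$ additionally forces $[a,b]\subseteq\sigma(A)$, exactly as in the remark preceding that theorem. Applying Theorem~\ref{main_theorem} then yields at once that the spectrum of $A$ is purely absolutely continuous on $[a,b]$, that $\sigma_n\to\sigma$ uniformly on $[a,b]$, that $\sigma(\lambda)=\sigma(a)+\int_a^\lambda f(x)\,dx$, and that $f\in L_p[a,b]$, which is all that is claimed.

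The one point requiring care is the passage to a tail. Theorem~\ref{main_theorem} as stated asks that its bounds hold for all $n\in\mathbb N$, whereas here the inclusion $[a,b]\subset I_n$ (hence the absolute continuity of $A_n$ on $[a,b]$ and the validity of the density formula) together with the bounds hold only for $n\ge N$. Since the strong convergence $A_n\to A$ and every limiting conclusion of Theorem~\ref{main_theorem} are unaffected by discarding finitely many terms, re-indexing the sequence to begin at $N$ removes this discrepancy. I expect this bookkeeping, together with the routine verification that~(\ref{f_n(x)_3.6}) really reduces to the displayed $f_n$, to be the only genuine (and very mild) obstacle; everything else is a direct citation of the two theorems being combined.
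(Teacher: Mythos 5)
Your proposal is correct and follows exactly the route the paper takes: the paper offers no separate proof beyond the phrase ``combining the results of Theorems~(\ref{main_theorem}) and~(\ref{spectr_A_n}) and using the Definition,'' and your write-up simply makes that combination explicit (identification of the density via~(\ref{f_n(x)_3.6}), use of only the first assertion of Theorem~(\ref{spectr_A_n}), the role of the lower bound $C$ in securing $[a,b\,]\subseteq\sigma(A)$, and the harmless re-indexing to a tail). If anything, your version is more careful than the paper's own.
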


{\bf Remark.} {\it The functions $f_n(x)$ are continuous on $[a,b\,]$, if the system $I_n$ is centered on this interval. Hence if the system $f_n(x)$ converges uniformly on $[a,b\,]$ to a positive function $f(x)$, then the spectrum of $A$ on $[a,b\,]$ is purely absolutely continuous and $f(x)\in C[a,b]$.}

Using the recurrence relations for polynomials $P_n$, one can present the denominator of functions $f_n(x)$ in the form
$$
b_nP^2_n(x)-b_{n-1}P_{n-1}(x)P_{n+1}(x)=b_n(P_{n+1}^2(x)+P_{n}^2(x))-(x-a_n)P_{n+1}(x)P_{n}(x)
$$
Hence if $|x-a_n|\le 2b_n q$ where $0<q<1$, then 
$$
b_n\left(P_{n+1}^2(x)+P_{n}^2(x)\right)(1-q)\le b_nP^2_n(x)-b_{n-1}P_{n-1}(x)P_{n+1}(x)\le b_n\left(P_{n+1}^2(x)+P_{n}^2(x)\right)(1+q)
$$
Therefore
$$
\frac{\frac{\sqrt{1-q^2}}{\pi(1+q)}}{b_n(P_{n+1}^2(x)+P_{n}^2(x))}\le f_n(x)\le\frac{\frac{1}{\pi(1-q)}}{b_n(P_{n+1}^2(x)+P_{n}^2(x))}\,,
$$
whence one obtains

\begin{theorem}
\label{absolute_continuity_A_2}
Suppose that the operator $A$ represented by Jacobi matrix~(\ref{Jacobi_Matrix}) is self-adjoint.
Suppose that there exist the constants $C>0$, $0<q<1$ and a positive function $g(x)\in L_p[a,b\,]$ ($p\ge1$) such that for all sufficiently large $n$ and almost all $x\in[a,b\,]$ the following estimates hold\\

$1.\quad|x-a_n|\le 2b_n q$

\vspace{0.2cm}
$2.\quad\frac{\displaystyle 1}{\displaystyle g(x)}\le b_n(P_{n+1}^2(x)+P_{n}^2(x))\le C$
\vspace{0.2cm}

Then the spectrum of the operator $A$ is purely absolutely continuous on $[a,b\,]$. The sequence $\sigma_n(\lambda)$ uniformly on $[a,b\,]$ converges to $\sigma(\lambda)$,
$$
\sigma(\lambda)=\sigma(a)+\int\limits_a^\lambda f(x)\,dx\,,
$$
and $f(x)\in L_p[a,b\,]$.
\end{theorem}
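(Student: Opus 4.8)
The plan is to reduce this statement directly to Theorem~(\ref{Jacobi_criterion_of_absolutely_continuity}), whose hypotheses are already phrased in terms of the spectral densities $f_n(x)$ of the approximating operators $A_n$. The two numbered conditions are tailored precisely so that, once combined with the elementary two-sided bound on $f_n(x)$ displayed just above the statement, they yield the hypotheses of that theorem with the constant $C$ and the dominating function $g$ replaced by suitable positive multiples. So the whole task is a matter of verifying that the reduction goes through with honest bookkeeping of constants.

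First I would show that condition~1 forces the system $I_n$ to be centered on $[a,b\,]$ in the sense of the Definition preceding Theorem~(\ref{Jacobi_criterion_of_absolutely_continuity}). For each fixed $n$ the numbers $a_n,b_n$ are constants, so the set $\{x:|x-a_n|\le2b_nq\}$ is the closed interval $J_n=[a_n-2b_nq,\,a_n+2b_nq]$. If almost every $x\in[a,b\,]$ lies in $J_n$, then $[a,b\,]\setminus J_n$ is a relatively open subset of $[a,b\,]$ of measure zero, hence empty; therefore $[a,b\,]\subseteq J_n$. Since $0<q<1$, this gives $[a,b\,]\subset(a_n-2b_n,\,a_n+2b_n)=\mathrm{int}\,I_n$ for all sufficiently large $n$, which is exactly the centering condition (and, by the Remark following that theorem, also guarantees that the $f_n(x)$ are continuous on $[a,b\,]$).

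Next I would feed condition~2 into the bound
$$
\frac{\frac{\sqrt{1-q^2}}{\pi(1+q)}}{b_n(P_{n+1}^2(x)+P_{n}^2(x))}\le f_n(x)\le\frac{\frac{1}{\pi(1-q)}}{b_n(P_{n+1}^2(x)+P_{n}^2(x))}\,,
$$
which is valid precisely because condition~1 places $x$ inside $I_n$. Taking reciprocals in condition~2 gives $\frac{1}{C}\le\frac{1}{b_n(P_{n+1}^2(x)+P_{n}^2(x))}\le g(x)$ for all large $n$ and almost all $x\in[a,b\,]$. Inserting the upper reciprocal bound into the upper estimate yields $f_n(x)\le\frac{1}{\pi(1-q)}\,g(x)=:\tilde g(x)$, where $\tilde g$ is positive and belongs to $L_p[a,b\,]$, being a positive scalar multiple of $g$. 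Inserting the lower reciprocal bound into the lower estimate yields $f_n(x)\ge\frac{\sqrt{1-q^2}}{\pi(1+q)\,C}=:\tilde C>0$, a constant independent of $n$ and $x$.

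Having established that $I_n$ is centered on $[a,b\,]$ and that $\tilde C\le f_n(x)\le\tilde g(x)$ for all sufficiently large $n$ and almost all $x\in[a,b\,]$, with $\tilde C>0$ and $\tilde g\in L_p[a,b\,]$ positive, I would invoke Theorem~(\ref{Jacobi_criterion_of_absolutely_continuity}) verbatim: it delivers pure absolute continuity of the spectrum of $A$ on $[a,b\,]$, uniform convergence of $\sigma_n(\lambda)$ to $\sigma(\lambda)$, and $f\in L_p[a,b\,]$, which is the full conclusion. There is no genuinely hard step here; the only points needing care are the correct tracking of the replacement constants $\tilde C,\tilde g$ and the verification that condition~1 indeed yields ``centered'' in the precise sense required, so that the hypotheses of Theorem~(\ref{Jacobi_criterion_of_absolutely_continuity}) are met exactly.
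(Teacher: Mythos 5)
Your proposal is correct and follows essentially the same route as the paper: the paper derives the two-sided bound on $f_n(x)$ in terms of $b_n(P_{n+1}^2(x)+P_n^2(x))$ from the identity for the Turan-type denominator under condition~1, and then the theorem is obtained ``whence'' by substituting condition~2 and invoking Theorem~(\ref{Jacobi_criterion_of_absolutely_continuity}), exactly as you do. Your explicit verification that condition~1 forces $[a,b\,]\subseteq\mathrm{int}\,I_n$ (so the system $I_n$ is centered) is a detail the paper leaves tacit, but it is the intended reading.
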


Note that the right hand side of the second estimate of this theorem is a consequence of subordination theory as it was shown in~\cite{10,12} (see also~\cite{13,14,15}) and it provides the presence of absolutely continuous part of the spectrum on $[a,b\,]$. Here it is obtained by a natural and simple way which is not connected with subordination theory.

The most simple form the second estimate has in the case $L_\infty$. Actually then it takes the form
$$
C_1\le b_n(P_{n+1}^2(x)+P_{n}^2(x))\le C_2\,
$$
where $C_1,C_2$ are some positive constants. 

One can state a simple conditions on the coefficients $a_n$ and $b_n$ of Jacobi matrix in the case $f(x)\in C(-\infty,+\infty)$ (with accuracy up to equivalence). Some kinds of this result were obtained by different methods in works~\cite{22,5,14,18}. Here we give independent, new and simple proof of it.
 
\begin{theorem}
\label{absolute_continuity_A_trough_a_n_b_n}
Suppose that the operator $A$ represented by Jacobi matrix~(\ref{Jacobi_Matrix}) is self-adjoint.
Let $a_n$ and $b_n$ satisfy the conditions
\begin{enumerate}
\item $\displaystyle \lim b_n=+\infty$

\item $\displaystyle \lim\frac{a_n}{b_n}=s\,,\qquad 0\le |s|<2$

\item $\displaystyle \lim\frac{b_n}{b_{n+1}}=1$

\item $\displaystyle\left\{\frac{b_{n-1}}{b_n}-\frac{b_{n-2}}{b_{n-1}}\right\}
\in l_1\,,\quad
\left\{\frac{1}{b_n}-\frac{1}{b_{n-1}}\right\}\in l_1\,,\quad
\left\{\frac{a_n}{b_n}-\frac{a_{n-1}}{b_{n-1}}\right\}\in l_1$

\end{enumerate}
Then the spectrum of the operator $A$ is purely absolutely continuous $\sigma(A)=\sigma_{ac}(A)=\mathbb{R}$, the corresponding spectral density $f(x)\in C(-\infty,+\infty)$ and the sequence $\sigma_n(x)$ uniformly converges to the function $\sigma(x)$ on any finite interval.
\end{theorem}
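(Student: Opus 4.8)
The plan is to verify, on every finite interval $[a,b]$, the hypotheses of Theorem~\ref{absolute_continuity_A_2} together with the uniform convergence required in the Remark following it, and then to let $[a,b]$ exhaust $\mathbb{R}$. First I would record the elementary consequences of conditions 1--2. Since $b_n\to+\infty$ we have $x/b_n\to0$ uniformly for $x$ in a bounded set, so $(a_n-x)/b_n\to s$ uniformly on $[a,b]$; because $|s|<2$ this gives, for any fixed $q$ with $|s|/2<q<1$, the estimate $|x-a_n|\le 2b_nq$ for all large $n$ and all $x\in[a,b]$, which is condition~1 of Theorem~\ref{absolute_continuity_A_2}. The same computation shows $a_n\pm2b_n\to\pm\infty$, so the system $I_n=[a_n-2b_n,a_n+2b_n]$ is centered on every finite interval and the absolute continuity established on each $[a,b]$ will fill all of $\mathbb{R}$; it also shows the numerator $\sqrt{4b_n^2-(a_n-x)^2}/(2\pi b_n)=\frac{1}{2\pi}\sqrt{4-(a_n-x)^2/b_n^2}$ of $f_n$ converges uniformly on $[a,b]$ to the positive constant $\frac{1}{2\pi}\sqrt{4-s^2}$.

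The heart of the proof is to show that the (generalized Tur\'an) denominator
$$\Delta_n(x)=b_nP_n^2(x)-b_{n-1}P_{n-1}(x)P_{n+1}(x)=b_n\bigl(P_{n+1}^2(x)+P_n^2(x)\bigr)-(x-a_n)P_n(x)P_{n+1}(x)$$
converges uniformly on $[a,b]$ to a positive continuous function $\Delta(x)$. Granting this, $f_n(x)=\frac{1}{2\pi}\sqrt{4-(a_n-x)^2/b_n^2}\,/\,\Delta_n(x)$ converges uniformly on $[a,b]$ to the positive continuous function $f(x)=\sqrt{4-s^2}/(2\pi\Delta(x))$; the Remark following Theorem~\ref{absolute_continuity_A_2} then gives purely absolutely continuous spectrum on $[a,b]$ with $f\in C[a,b]$, and the uniform convergence of $\sigma_n$ to $\sigma$ comes from Theorem~\ref{absolute_continuity_A_2} itself, whose remaining hypothesis (the two-sided bound $C\le f_n\le g$ with $g\in L_\infty\subset L_p$) follows at once from $\Delta_n\to\Delta>0$ uniformly. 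Letting $[a,b]$ exhaust $\mathbb{R}$ yields $\sigma(A)=\sigma_{ac}(A)=\mathbb{R}$ and $f\in C(-\infty,+\infty)$.

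To establish convergence of $\Delta_n$ I would pass to the reduced variables $\alpha_n=(x-a_n)/b_n$ and $\beta_n=b_{n-1}/b_n$, in which the recurrence reads $P_{n+1}=\alpha_nP_n-\beta_nP_{n-1}$. A direct substitution gives the increment
$$\Delta_{n+1}-\Delta_n=(b_{n+1}-b_n)\bigl(P_{n+1}^2-\beta_{n+1}P_n^2\bigr)-\bigl[x(\beta_{n+1}-1)-a_{n+1}\beta_{n+1}+a_n\bigr]P_nP_{n+1},$$
whose coefficients vary slowly while the polynomial factors $P_{n+1}^2-P_n^2$ and $P_nP_{n+1}$ are oscillatory. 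The main obstacle is that the naive term-by-term bound fails: under conditions 1--3 alone the factor $b_{n+1}-b_n$ need not be summable against $P_{n+1}^2-P_n^2=O(1/b_n)$. The device that saves the argument is summation by parts, which transfers the differencing from the sequences $b_n,a_n$ onto the partial sums of the oscillatory factors; these partial sums stay bounded (the telescoping sum $\sum(P_{k+1}^2-P_k^2)=P_{n+1}^2-P_m^2$ directly, and $\sum P_kP_{k+1}$ because the limiting phase $\theta=\arccos(-s/2)$ lies strictly inside $(0,\pi)$), while what is left is controlled by the \emph{second} differences of the ratio sequences $\beta_n$, $1/b_n$, $a_n/b_n$, i.e. precisely the three $\ell_1$ hypotheses of condition~4. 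I expect this to require two passes: a first pass produces $|\Delta_{n+1}-\Delta_n|\le\varepsilon_n\,\Delta_n$ with $\varepsilon_n\in\ell_1$ uniformly on $[a,b]$ (using $\Delta_n\asymp R_n:=b_n(P_{n+1}^2+P_n^2)$), so that a discrete Gronwall argument keeps $R_n$ and $\Delta_n$ uniformly bounded; the sharpened second pass then yields $\sum_n\sup_{x\in[a,b]}|\Delta_{n+1}(x)-\Delta_n(x)|<\infty$, whence $\Delta_n$ is uniformly Cauchy and converges to a continuous limit.

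Finally, positivity of the limit must be checked, since a priori $\Delta(x)$ could vanish. Here I would use the lower bound $\Delta_n\ge(1-q)R_n>0$, valid by $|x-a_n|\le2b_nq$ and by the fact that $P_n$ and $P_{n+1}$ have no common zero, together with the multiplicative form $\Delta_{n+1}=\Delta_n(1+r_n)$ of the increment estimate, where $\sum_n\sup_{[a,b]}|r_n|<\infty$. Then
$$\Delta_{n_0}(x)\prod_{k\ge n_0}(1-|r_k(x)|)\le\Delta(x)\le\Delta_{n_0}(x)\prod_{k\ge n_0}(1+|r_k(x)|),$$
and since both infinite products converge to strictly positive values uniformly on $[a,b]$, the limit $\Delta(x)$ is bounded below by a positive constant there. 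Continuity of $\Delta$ having already been secured by the uniform convergence, the density $f=\sqrt{4-s^2}/(2\pi\Delta)$ is positive and continuous, which completes the verification on each finite interval and hence proves the theorem.
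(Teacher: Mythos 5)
Your overall strategy is the paper's: reduce to Theorem~\ref{absolute_continuity_A_2} (and the Remark after Theorem~\ref{Jacobi_criterion_of_absolutely_continuity}) on each finite interval, derive the increment identity for the Tur\'an-type determinant $\Delta_n$, and combine summation by parts with a discrete Gronwall bound. Your increment formula is exactly the paper's, and the observation that the differencing must be moved onto the ratio sequences (whose first differences are the $\ell_1$ data of condition~4) is the right key idea. However, the quantitative claims you then assert are false under the hypotheses, and this is a genuine gap. Take $a_n=0$, $b_n=n+1$: all four conditions hold, yet $\gamma_{n+1}=1-b_n/b_{n+1}\sim 1/n$ is not summable, and the term $\gamma_{n+1}\,(b_{n+1}P_{n+1}^2-b_nP_n^2)$ in the increment is generically of size $1/n$ times a bounded, non-vanishing oscillating factor. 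Hence neither $|\Delta_{n+1}-\Delta_n|\le\varepsilon_n\Delta_n$ with $\varepsilon_n\in\ell_1$, nor $\sum_n\sup_{x}|\Delta_{n+1}(x)-\Delta_n(x)|<\infty$, nor the multiplicative form $\Delta_{n+1}=\Delta_n(1+r_n)$ with $\sum_n\sup|r_n|<\infty$ holds: the convergence of $\Delta_n$ is conditional, not absolute. In particular your infinite-product argument for the positivity of $\Delta$ collapses, since $\prod_k(1\pm|r_k|)$ converges to a nonzero limit only when $r_k\in\ell_1$.

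What actually closes the argument (and is what the paper does) is to keep the Abel-summed \emph{two-point} difference
$$
\Delta_n-\Delta_m=\gamma_nb_nP_n^2-\gamma_mb_mP_m^2+\sum_{k=m+1}^{n-1}\alpha_kb_kP_k^2+\sum_{k=m}^{n-1}2\beta_kb_kP_kP_{k+1}
$$
as the basic object: the interior sums carry the $\ell_1$ coefficients $\alpha_k,\beta_k$ against quantities dominated by $\Delta_k/(1-q)$, while the boundary terms are small only because $\gamma_n\to0$ (condition~3) combined with the uniform boundedness of $b_nP_n^2$ supplied by the Gronwall step; the Cauchy property and the uniform bound are read off from this inequality, and positivity of the limit comes from the same inequality read from below, $(1-\delta_n)\Delta_n\ge(1-\delta_m-\epsilon)\Delta_m>0$, rather than from a product formula. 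Incidentally, your appeal to cancellation in $\sum P_kP_{k+1}$ via the limiting phase $\arccos(-s/2)$ is unnecessary: the coefficient $\beta_k$ multiplying $b_kP_kP_{k+1}$ is already summable by condition~4, and the crude bound $b_k|2P_kP_{k+1}|\le b_k(P_k^2+P_{k+1}^2)\le\Delta_k/(1-q)$ suffices.
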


\begin{proof}
Fix an arbitrary finite interval $[a,b\,]$ of a real axis. From conditions (1) and (2) it follows that for all $x\in[a,b\,]$ and all sufficiently large $n$ the estimation $|x-a_n|\le 2b_n q$ holds for $|s|/2<q<1$. Denote
$$
\Delta_n(x)=b_nP^2_n(x)-b_{n-1}P_{n-1}(x)P_{n+1}(x)
$$
Then
$$
\lim\limits_{n\to\infty}f_n(x)=\frac{1}{\pi}\lim\limits_{n\to\infty}\frac{1}{\Delta_n(x)}\,,
$$
and due to Remark to the Theorem~(\ref{Jacobi_criterion_of_absolutely_continuity}) we have to prove that the sequence $\Delta_n(x)$ uniformly on $[a,b\,]$ converges to a positive function (recall that $\Delta_n(x)$ is always positive for all sufficiently large $n$ when the system $I_n$ is centered on $[a,b\,]$). We have
$$
\Delta_{n+1}-\Delta_n=\left(1-\frac{b_n}{b_{n+1}}\right)b_{n+1}P_{n+1}^2-\left(1-\frac{b_n}{b_{n+1}}\right)b_nP_n^2+
$$
$$
+\left(x\left(\frac{1}{b_n}-\frac{1}{b_{n+1}}\right)+\left(\frac{a_{n+1}}{b_{n+1}}-\frac{a_n}{b_n}\right)\right)b_n P_n P_{n+1}
$$
Adding consecutively this equalities, one obtains for $n>m+1$
$$
\Delta_n-\Delta_m=\gamma_n b_nP_n^2-\gamma_m b_{m}P_{m}^2+
\sum_{k=m+1}^{n-1}\alpha_k b_k P_k^2+\sum_{k=m}^{n-1}\beta_k b_k 2P_k P_{k+1}\,,
$$
where
$$
\alpha_k=\frac{b_k}{b_{k+1}}-\frac{b_{k-1}}{b_{k}}\,,\quad \beta_k=\frac{1}{2}\left(x\left(\frac{1}{b_k}-\frac{1}{b_{k+1}}\right)+\left(\frac{a_{k+1}}{b_{k+1}}-\frac{a_k}{b_k}\right)\right)\,,
\quad \gamma_k=1-\frac{b_{k-1}}{b_k}
$$
Since for all $n$ sufficiently large and all $x\in[a,b\,]$
$$
b_n P_n^2(1-q)\le b_n\left(P_{n+1}^2+P_{n}^2\right)(1-q)\le \Delta_n\,
$$
we have for $m$ sufficiently large ($n>m+1$) and all $x\in[a,b\,]$
\begin{equation}
\label{estimation_for_Delta}
(1-q)\left|\Delta_n-\Delta_m\right|\le\left|\gamma_n\right|\Delta_n+\left|\gamma_m\right|\Delta_{m}+
\sum_{k=m}^{n-1}(|\alpha_k|+|\beta_k|)\Delta_k
\end{equation}
Suppose that the sequence $\Delta_n$ is uniformly bounded in $[a,b\,]$ (we will prove it below). Then from conditions (3) and (4) of the Theorem it follows that for any $\epsilon>0$ and any $x\in[a,b\,]$
$$
|\Delta_n-\Delta_m|<\epsilon\,,
$$
provided $m>N$ and any $n>m$. In the case $n=m+1$ this estimation is also fulfilled. It follows that the sequence $\Delta_n$ uniformly converges to nonnegative continuous on $[a,b\,]$ function $\Delta(x)$.

It remains to prove uniform boundedness of $\Delta_n(x)$ and positivity of $\Delta(x)$. Return to the inequality~(\ref{estimation_for_Delta}). One has
$$
(1-\delta_n)\Delta_n\le(1+\delta_m)\Delta_m+\sum_{k=m}^{n-1}\epsilon_k\Delta_k\,,
$$
where
$$
\delta_k=\frac{|\gamma_k|}{1-q}\,,\quad \epsilon_k=\frac{|\alpha_k|+|\beta_k|}{1-q}
$$
Denote
$$
B_n=\sum_{k=m}^{n-1}\epsilon_k\Delta_k
$$
We have
$$
B_n-B_{n-1}=\epsilon_{n-1}\Delta_{n-1}\le\epsilon_{n-1}\frac{B_{n-1}+(1+\delta_m)\Delta_m}{1-\delta_{n-1}}
$$
and hence
$$
B_n\le\left(1+\frac{\epsilon_{n-1}}{1-\delta_{n-1}}\right)B_{n-1}+\frac{\epsilon_{n-1}}{1-\delta_{n-1}}(1+\delta_m)\Delta_m
$$
From this we consecutively deduce 
$$
B_n\le(B_m+(1+\delta_m)\Delta_m)\prod_{k=m}^{n-1}\left(1+\frac{\epsilon_{k}}{1-\delta_{k}}\right)\le
$$
$$
\le(B_m+(1+\delta_m)\Delta_m)\prod_{k=m}^{\infty}\left(1+\frac{\epsilon_{k}}{1-\delta_{k}}\right)=C<+\infty
$$ 
This estimation is uniform in $x\in[a,b\,]$. Here we used again the uniform convergence of series $\sum\frac{\epsilon_{k}}{1-\delta_{k}}$ which follows from conditions (3) and (4) of the Theorem. From
$$
(1-\delta_n)\Delta_n\le(1+\delta_m)\Delta_m+B_n
$$
it follows that $\Delta_n$ is also uniformly bounded.

Let us prove at last that the limit function $\Delta(x)$ is strictly positive in $[a,b\,]$. From~(\ref{estimation_for_Delta}) one has
$$
(1-\delta_n)\Delta_n\ge(1-\delta_m)\Delta_m-\sum_{k=m}^{n-1}\epsilon_k\Delta_k
$$
(in the same notations as before). Since $\sum\limits_{k=m}^{n-1}\epsilon_k\Delta_k\le\epsilon\Delta_m$ where $\epsilon$ is arbitrary small for $m$ sufficiently large, we have
$$
(1-\delta_n)\Delta_n\ge(1-\delta_m-\epsilon)\Delta_m=C>0
$$ 
for $m$ sufficiently large and all $n>m+1$. Passing to the limit $n\to\infty$, we obtain $\Delta\ge C>0$ ($C$ depends on $x$ of course).

Thus the sequence $f_n(x)$ uniformly converges to a positive continuous function $\displaystyle f(x)=\frac{1}{\pi\Delta(x)}$ in $[a,b\,]$ and Theorem~(\ref{Jacobi_criterion_of_absolutely_continuity}) gives the pure absolute continuity of the operator $A$ spectrum on $[a,b\,]$. Since the interval $[a,b\,]$ is arbitrary the theorem is proved.
\end{proof}

In work~\cite{16} it was shown that in the case $\displaystyle\lim\frac{a_n}{b_n}=s\,,\, 0<|s|<2$ one can extend the class of weights. Namely, the basic conclusions of the Theorem~(\ref{absolute_continuity_A_trough_a_n_b_n}) (except $f(x)\in C(-\infty,+\infty)$) remains valid if the sequences
$$
\frac{a_{n-1}a_n}{b_n^2}\,,\quad\frac{a_{n-1}+a_n}{b_n^2}\,,\quad\frac{1}{b_n^2}
$$
have bounded variation. Using the Theorem~(\ref{Jacobi_criterion_of_absolutely_continuity}) and main ideas of~\cite{16} one can show that in this case also $f(x)\in C(-\infty,+\infty)$.
 
\section{Conclusion.}

Note that the method of analysis of the absolutely continuous spectrum developed here can be applied not to the finite-difference operators only but to any self-adjoint operators in separable Hilbert space provided that one can find a convenient approximative sequence of operators $A_n$ with absolutely continuous spectrum strongly converging to the operator $A$ on a dense set.

\vspace{0.5cm}

If this manuscript was useful for you, you may support author because this project is not supported by Russian State:
yandex purse number - 410013774009102

\end{document}